\def\ep{\varepsilon}
\numberwithin{equation}{section}
\newtheorem{lem}{Lemma}[section]
\newtheorem{tm}{Theorem}[section]
\newtheorem{prop}{Proposition}[section]
\newtheorem{rem}{Remark}[section]
\newtheorem{dif}{Definition}[section]
\begin{document}

\centerline {\bf DEGENERATE SELF-SIMILAR MEASURES, SPECTRAL ASYMPTOTICS }
\centerline {\bf AND SMALL DEVIATIONS OF GAUSSIAN  PROCESSES}

\centerline {A.I.~Nazarov\footnote{Supported by grant RFBR No. 10-01-00154a
and by grant of scientific school No. 4210.2010.1}, I.A.~Sheipak\footnote{Supported by grants RFBR No. 10-01-00423a and RFBR No. 09-01-90408ukr\_f\_a}}

\section{Introduction}

The problem of small ball behavior for the norms of Gaussian processes is intensively studied in recent years. The simplest and most explored case is
that of $L_2$-norm. Let us consider a Gaussian  process  $X(t)$, $0\leq t\leq 1$, with zero  mean and the covariance function $G_X(t,s)=EX(t)X(s)$, $s,t\in [0,1]$. Let $\mu$ be a measure on $[0,1]$. Set
\[
\|X\|_{\mu}=\|X\|_{L_2(0,1;\mu)}=(\int\limits_0^1 X^2(t)\ \mu(dt))^{1/2}
\]
(the index  $\mu$ will be omitted if $\mu$ is the Lebesgue measure). The problem is to evaluate the asymptotics of  ${\mathbb P}\{\|X\|_{\mu}\leq\ep\}$ as $\ep \rightarrow 0$. Note that the case of 
absolutely continuous measure $\mu(dt)=\psi(t)dt$, $\psi\in L_1(0,1)$, can be easily reduced
to the case of the Lebesgue measure $\psi\equiv1$ if we replace  $X$ by the Gaussian process 
$X \sqrt{\psi}$. In general case we can assume $\mu([0,1])=1$ by rescaling. The advance of this topic
starting from well-known work \cite{S}, is reviewed in \cite{Lf} and \cite{LS}. References on
later works can be found on the site \cite{site}.

By the well-known Karhunen--Lo\'eve expansion we have the distributional equality
\begin{equation}\label{KL}
\|X\|_{\mu}^2\overset{d}{=}\sum_{j=1}^\infty \lambda_j\xi_j^2,
\end{equation}
where $\xi_j$, $j\in\mathbb N$, are  independent standard normal r.v.'s and
$\lambda_j>0$, $j\in\mathbb N$, $\sum\limits_n\lambda_n <\infty$, are the eigenvalues of 
the integral equation
\begin{equation}\label{int_eq}
\lambda y(t)=\int\limits_0^1
G_X(s,t)y(s)\mu(ds),\qquad 0\leq t\leq1.
\end{equation}
Thus, we are led to the equivalent problem of studying the asymptotic behavior as $\ep \rightarrow 0$ of  ${\mathbb P}\left\{\sum_{j=1}^\infty\lambda_j \xi_j^2\leq\ep ^2 \right\}$. 
The answer heavily depends on available information on the eigenvalues sequence $\lambda_j$. Since
the explicit formulas for these eigenvalues are known only for a limited number of processes 
(see \cite{Li}, \cite{DLL}, \cite{LS}), the study of spectral asymptotics for integral operator
(\ref{int_eq}) is of great importance.

If $G_X$ is the Green function of a boundary value problem (BVP) for ordinary differential operator 
then the sharp spectral asymptotics can be obtained by classical method traced back to Birkhoff, see \cite{Nm}. This approach developed in  \cite{NN1}, \cite{Na1}, allowed to calculate the small ball
asymptotics up to a constant for Gaussian processes of the mentioned class. Moreover, 
if eigenfunctions of (\ref{int_eq}) can be expressed via elementary or special functions then
the sharp constants can be obtained by complex variable methods, as it was done in \cite{Na2}, see 
also \cite{GHLT}--\cite{NP}, \cite{Na1}.

In a more general situation, we cannot expect to obtain the sharp asymptotics. Thus, we have to
consider only \textbf{logarithmic} asymptotics (i.e. the asymptotics  of 
$\ln {\mathbb P}\{\|X\|_{\mu}\leq \ep \}$ as $\ep\to0$). It was shown in \cite{NN2} that for this goal
it suffices, under some assumption, to know the main term of eigenvalues asymptotics
(this result was considerably generalized in recent work \cite{Na4}). This enables to apply
quite general result established in \cite{BS1}. In this way the explicit logarithmic asymptotics was
obtained for a wide class of processes including fractional Brownian motion, fractional 
Ornstein--Uhlenbeck process, the integrated versions of these processes as well as multiparameter
generalization  (for example, fractional Levi field). Thereby the absolutely continuous measures with arbitrary summable nonnegative densities were considered. In \cite{KNN} the spectral asymptotics for operators with tensor product structure were obtained. This enables to develop logarithmic asymptotics 
of $L_2$-small ball deviations for corresponding class of Gaussian fields.

The next class of problems deals with $\mu$ singular with respect to Lebesgue measure 
(it was shown in \cite{BS1} that if a measure contains absolutely continuous component then its
singular part does not influence on the main term of asymptotics). All the results here concerned
\textbf{self-similar measures}. Namely, it was shown in \cite{KL}, \cite{SV} that for $G_X$ being 
the Green function for the simplest operator $Lu\equiv-u''$, in so-called
\textbf{non-arithmetic} case the eigenvalues of (\ref{int_eq}) have the pure power asymptotics 
while in {\bf arithmetic case} the asymptotics of  $\lambda_j$ is more complicated; besides power 
term it can contain a periodic function of $\ln(j)$. This function is conjectured to be non-constant 
in all non-trivial cases, but this problem is still open. Only in simplest case of ``Cantor ladder'' 
this conjecture was proved in \cite{VSh1}, \cite{VSh2}.

The results of \cite{KL}, \cite{SV} were generalized later in two directions: in 
\cite{VSh1}--\cite{VSh3} the more general (non-sign-definite) weight functions were considered while 
in \cite{Na3} the Green functions of ordinary differential operators of arbitrary order were examined. 
The logarithmic asymptotics was obtained in \cite{Na3} for corresponding processes as well.

Finally, in the recent paper \cite{VSh4} the discrete \textbf{degenerate self-similar} weights were explored. It turns out that if $G_X$ is the Green function for the operator $Lu\equiv-u''$ then 
the eigenvalues of (\ref{int_eq}) in this case have exponential asymptotics. 
Note that method applied in preceding papers and based on renewal equation fails in the case of
degenerate self-similarity. For this reason the techniques of eigenvalues estimation
was improved in \cite{VSh4}.

In our paper we extend the result of \cite{VSh4} to the case where $G_X$ is the Green function of
a boundary problem for ordinary differential operator of arbitrary even order with the main term $(-1)^{\ell}y^{(2\ell)}$.  For simplicity we offer up the generality of weights and consider only discrete
\textbf{measure} $\mu$ with degenerate self-similarity. As a corollary, using the result of
\cite[Theorem 2]{Na4} we establish logarithmic small ball asymptotics in $L_2$-norm for corresponding 
class of Gaussian process. Let us recall that this class is rather wide, it contains in particular
${\mathfrak s}$-times integrated Brownian motion and ${\mathfrak s}$-times integrated Ornstein--Uhlenbeck process.

The paper is organized as follows. Section 2 contains auxiliary information on degenerate self-similar measures. In Section 3 the result of \cite{VSh4} is extended to the differential operators of high 
order. Then, in Section 4, we derive the logarithmic small ball asymptotics for processes of the 
class considered and give some examples. In Appendix (Section 5) a variant of the Weyl theorem used 
in the proof is given.

Let us recall some notation. A function $G(s,t)$ is called the Green function of BVP for a differential operator ${\cal L}$ if it satisfies the equation ${\cal L}G=\delta(s-t)$ in the sense of distributions and satisfies the boundary conditions. The existence of the Green function is equivalent
to the invertibility of operator ${\cal L}$  with given boundary conditions, and 
$G(s,t)$ is a kernel of the integral operator ${\cal L}^{-1}$.

$W_2^{\ell}(0,1)$ is the Hilbert space of functions $y$ having continuous derivatives up to 
$({\ell}-1)$-th order with $y^{({\ell}-1)}$ absolutely continuous on $[0,1]$ and 
$y^{({\ell})}\in L_2(0,1)$. ${\stackrel {o}{W}\!\vphantom{W}_2^{\ell}}(0,1)$ is the
subspace of functions $y\in W_2^{\ell}(0,1)$ satisfying zero boundary conditions $y(0)=y(1)=\dots=y^{({\ell-1})}(0)=y^{({\ell-1})}(1)=0$.

The principles of self-adjoint operators and  quadratic forms theory used in the paper can be found 
in the monograph \cite{BS2}.

Various constants are denoted by $c$. We point their dependence on parameters by $c(\ldots)$ if it is
necessary.

\section{Degenerate self-similar measures}

Let us recall that general concept of self-similar measure was introduced in \cite{H}. The construction of self-similar measure on interval described in \cite{SV}, see also \cite{Na3}, enables to construct 
measures with positive Hausdorff dimension of support. Let us note, that the primitive of such measure 
is always a continuous function, which is self-similar in the sense of \cite{VSh1}, \cite{VSh3}. On the
other hand, a function $f$, self-similar in the mentioned sense, need not be continuous (the criteria of
its continuity are established in \cite[Sec. 3]{Sh1}). Moreover, under some assumptions on self-similarity parameters (see below) the derivative of $f$ in the sense of distributions is a discrete measure. This measure is not self-similar in the Hutchinson sense, so we call it {\it degenerate self-similar}.\medskip

Let $0=\alpha_1<\alpha_2<\ldots <\alpha_n<\alpha_{n+1}=1$, $n\ge2$, be a partition of the segment $[0,1]$. Define quantities $a_k>0$, $k=1,\ldots,n$, by the formula $a_k=\alpha_{k+1}-\alpha_k$. Consider also a Boolean vector $(e_k)$ and (for the moment arbitrary) vectors of real numbers
$(d_k)$ and  $(\beta_k)$, $k=1,\ldots,n$.

Now we define a family of affine transformations
$$S_k(t)= a_kt+\alpha_k, \quad e_k=0;
\qquad S_k(t)=\alpha_{k+1}-a_kt, \quad e_k=1.
$$
Thus, $S_k$ moves $[0,1]$ to $[\alpha_k,\alpha_{k+1}]$ (turning it over when $e_k=1$).

\begin{dif} The affine operator ${\cal S}$ given by the formula
\begin{equation}\label{eq:auxto}
{\cal S}[f](t)=\sum\limits_{k=1}^n\left(d_k\cdot f(S_k^{-1}(t))+\beta_k\right)
\cdot\chi_{]\alpha_k,\alpha_{k+1}[}(t),
\end{equation}
(here $\chi_E$ stands for the indicator of a set $E$) is called the \textbf{similarity operator}.
\end{dif}

Thus, the graph of ${\cal S}(f)$ on the interval $]\alpha_k,\alpha_{k+1}[$ is a shifted and shrinked copy 
of the graph of $f$ on $]0,1[$.

\begin{prop}\label{st:szhim_condition} (see \cite[Lemma 2.1]{Sh1} \footnote{In
\cite{Sh1} only the transformations $S_k$ without overturn the interval were considered, but this fact doesn't influence on proof.}) Operator ${\cal S}$ is contractive in $L_\infty]0,1[$ iff
\begin{equation}\label{eq:szim1}
\max_{1\le k\le n}|d_k|<1.
\end{equation}
\end{prop}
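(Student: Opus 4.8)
The plan is to show that the similarity operator $\mathcal{S}$ is a contraction on $L_\infty]0,1[$ precisely when $\max_k|d_k|<1$, by directly estimating the sup-norm of the difference $\mathcal{S}[f]-\mathcal{S}[g]$ and then verifying necessity through a suitable test function. Everything hinges on the fact that the intervals $]\alpha_k,\alpha_{k+1}[$ are disjoint and their union is (up to a null set) all of $]0,1[$, so the indicators $\chi_{]\alpha_k,\alpha_{k+1}[}$ select a single summand at each point.

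First I would prove sufficiency. Taking two functions $f,g\in L_\infty]0,1[$ and subtracting, the shift constants $\beta_k$ cancel, leaving
\begin{equation}\label{eq:diffplan}
\mathcal{S}[f](t)-\mathcal{S}[g](t)=\sum_{k=1}^n d_k\bigl(f(S_k^{-1}(t))-g(S_k^{-1}(t))\bigr)\chi_{]\alpha_k,\alpha_{k+1}[}(t).
\end{equation}
For $t$ in the interval $]\alpha_k,\alpha_{k+1}[$ only the $k$-th term survives, and since $S_k^{-1}$ maps this interval bijectively onto $]0,1[$, the value $f(S_k^{-1}(t))-g(S_k^{-1}(t))$ ranges over values of $f-g$ and is thus bounded by $\|f-g\|_{L_\infty}$. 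Hence the essential supremum over this interval is at most $|d_k|\cdot\|f-g\|_{L_\infty}$, and taking the maximum over $k$ gives $\|\mathcal{S}[f]-\mathcal{S}[g]\|_{L_\infty}\le\bigl(\max_k|d_k|\bigr)\|f-g\|_{L_\infty}$. If $\max_k|d_k|<1$ this is a genuine contraction.

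For necessity I would argue contrapositively: if $|d_{k_0}|\ge1$ for some index $k_0$, I exhibit a pair of functions on which $\mathcal{S}$ fails to contract. The natural choice is to take $g\equiv0$ and let $f$ be (a mollified version of) the indicator of a small subinterval on which, after applying $S_{k_0}^{-1}$, the surviving term in \eqref{eq:diffplan} has absolute value $|d_{k_0}|\,\|f\|_{L_\infty}\ge\|f\|_{L_\infty}$. This shows $\|\mathcal{S}[f]-\mathcal{S}[g]\|_{L_\infty}\ge\|f-g\|_{L_\infty}$, so $\mathcal{S}$ is not contractive. The orientation-reversing case $e_k=1$ requires no separate treatment here, since $S_k^{-1}$ is still an affine bijection of the interval onto $]0,1[$ and only the sense of traversal changes, which is irrelevant for the $L_\infty$-norm.

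The only genuinely delicate point is the essential-supremum bookkeeping in \eqref{eq:diffplan}: one must confirm that the change of variable $t\mapsto S_k^{-1}(t)$ preserves the essential supremum (it does, being affine and hence mapping null sets to null sets), and that the boundary points $\alpha_k$, being a finite and therefore null set, may be ignored when passing to $L_\infty$. I expect the main obstacle to be making the necessity direction fully rigorous at the level of $L_\infty$ rather than pointwise, since one wants the near-extremal behavior to occur on a set of positive measure; choosing $f$ supported on a positive-measure set whose $S_{k_0}$-image lies inside $]\alpha_{k_0},\alpha_{k_0+1}[$ resolves this cleanly.
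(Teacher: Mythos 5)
Your proof is correct. Note that the paper itself offers no proof of this proposition: it is imported verbatim from \cite[Lemma 2.1]{Sh1} (with the footnoted remark that allowing the orientation-reversing maps $S_k$ changes nothing), so there is no in-paper argument to compare against; your computation is the standard one that the citation presumably contains. Two small observations. First, the sufficiency estimate actually shows $\|{\cal S}[f]-{\cal S}[g]\|_{L_\infty}\le\bigl(\max_k|d_k|\bigr)\|f-g\|_{L_\infty}$ with equality of essential suprema on each $]\alpha_k,\alpha_{k+1}[$, because each $S_k^{-1}$ is an affine bijection onto $]0,1[$ whose inverse also maps null sets to null sets; so the Lipschitz constant of ${\cal S}$ is exactly $\max_k|d_k|$. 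Second, because of this exactness the necessity direction needs no test function at all: if $|d_{k_0}|\ge 1$, then for \emph{any} $f$ with $\|f\|_{L_\infty}=1$ and $g\equiv 0$ one has $\|{\cal S}[f]-{\cal S}[g]\|_{L_\infty}\ge |d_{k_0}|\,\mathop{\rm ess\,sup}_{]\alpha_{k_0},\alpha_{k_0+1}[}|f(S_{k_0}^{-1}(t))|=|d_{k_0}|\ge\|f-g\|_{L_\infty}$, so no contraction constant $q<1$ exists; the mollified indicator and the positive-measure-set discussion in your last paragraph are unnecessary detours, though not wrong. Your handling of the $e_k=1$ case and of the null set of endpoints $\{\alpha_k\}$ is correct.
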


It follows immediately from Proposition~\ref{st:szhim_condition} that under assumption (\ref{eq:szim1}) 
there exists a unique function $f\in L_\infty]0,1[$ satisfying the equation ${\cal S}(f)=f$. This 
function is called \textbf{self-similar with parameters} $(\alpha_k)$, $(e_k)$, $(d_k)$ and
$(\beta_k)$, $k=1,2,\ldots,n$.\medskip

Let us suppose now that exactly one of quantities $d_k$, $k=1,\ldots,n$, differs from zero. We denote 
by $m$ the corresponding index, $1\le m\le n$. It is obvious that in this case only $m$th element of
$(e_k)$ is relevant, and condition (\ref{eq:szim1}) becomes $|d_m|<1$.

\begin{lem}\label{tm:schet}
Under above conditions the self-similar function $f$ is piecewise constant, has bounded variation and 
possesses at most countable number of values. All discontinuity points are of the first type.
\end{lem}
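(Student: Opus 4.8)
The plan is to exploit that, since only $d_m\ne0$, the fixed--point equation ${\cal S}(f)=f$ degenerates into two regimes. First I would write it out coordinatewise: for $t\in\,]\alpha_k,\alpha_{k+1}[\,$ with $k\ne m$ it reads $f(t)=\beta_k$, so $f$ is already constant off the $m$-th interval $I:=\,]\alpha_m,\alpha_{m+1}[\,$; while for $t\in I$ it reads $f(t)=d_m\,f(S_m^{-1}(t))+\beta_m$, i.e.\ on $I$ the graph of $f$ is a rescaled (and, if $e_m=1$, reflected) copy of the whole graph. Thus all the structure is concentrated on $I$, and I would iterate the recursion by introducing the nested \emph{active} intervals $A_p:=S_m^p(I)$ together with the \emph{resolved} sets $G_p:=S_m^p\big(\bigcup_{k\ne m}\,]\alpha_k,\alpha_{k+1}[\,\big)$, on which $f$ has already become constant.

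Next I would describe the geometry of this iteration. Since $n\ge2$, the contraction ratio $a_m=\alpha_{m+1}-\alpha_m$ is $<1$, so $S_m$ is a genuine contraction with a unique fixed point $x_*\in I$; the intervals $A_p$ are nested and shrink to $\{x_*\}$, and up to the countable exceptional set $\{x_*\}\cup\{S_m^p(\alpha_k)\}$ the segment $[0,1]$ is the disjoint union $\bigsqcup_{p\ge0}G_p$. Unwinding the recursion $p$ times shows that on the constant piece $S_m^p(\,]\alpha_k,\alpha_{k+1}[\,)$ (with $k\ne m$) the value of $f$ equals
\begin{equation*}
d_m^{\,p}\beta_k+\beta_m\,\frac{1-d_m^{\,p}}{1-d_m}.
\end{equation*}
This already yields two of the claims: $f$ is piecewise constant, and its set of values is countable, accumulating only at $\tfrac{\beta_m}{1-d_m}$.

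For bounded variation I would use the scaling built into the recursion. Because $f|_I=d_m\,f\circ S_m^{-1}+\beta_m$, the variation of $f$ over $I$ equals $|d_m|\cdot V$, where $V:=\mathrm{Var}(f;[0,1])$; adding the finitely many jumps at the interior partition points $\alpha_2,\dots,\alpha_n$ gives the self--referential identity $V=|d_m|\,V+B$ with $B<\infty$, hence $V=B/(1-|d_m|)<\infty$ since $|d_m|<1$. To legitimately divide here I must first know $V<\infty$, so I would truncate: replacing $f$ on $A_p$ by the constant $\tfrac{\beta_m}{1-d_m}$ produces honest step functions $\tilde f_p$ that converge to $f$ uniformly (on $A_p$ the function deviates from $\tfrac{\beta_m}{1-d_m}$ by at most $|d_m|^p$ times a fixed bound), whose variations are dominated by the convergent geometric series $\sum_p|d_m|^pB$; passing to the limit via lower semicontinuity of the variation then gives $V<\infty$.

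Finally, for the nature of the discontinuities I would argue pointwise. Away from $x_*$ the candidate jump points $S_m^p(\alpha_k)$ are isolated, and $f$ is constant on each adjacent piece, so every such discontinuity is of the first type. At $x_*$ the uniform estimate above shows that the two--sided limit exists and equals $\tfrac{\beta_m}{1-d_m}$, while the relation $x_*=S_m(x_*)$ forces $f(x_*)=d_m f(x_*)+\beta_m=\tfrac{\beta_m}{1-d_m}$; hence $f$ is actually \emph{continuous} at $x_*$, and no discontinuity of the second type can occur. The reflection flag $e_m$ only alternates the orientation of the successive copies and affects none of these steps. I expect the main obstacle to be the careful handling of the accumulation point $x_*$ --- both the existence of the limit there and the rigorous proof that $V$ is finite before the self--referential identity may be used.
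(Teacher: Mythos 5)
Your proof is correct and follows essentially the same route as the paper's: the paper iterates $f_j=\mathcal S(f_{j-1})$ from $f_0\equiv0$ and notes that all new structure at step $j$ is confined to $S_m^j(]0,1[)$, with the variation there scaling by $d_m$ at each step --- exactly your nested-interval unwinding with geometric decay of the oscillation toward the accumulation point. Your version is merely more explicit (closed-form values, and the truncation argument that legitimizes the self-referential variation identity, a point the paper passes over), so no further comparison is needed.
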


\begin{proof}
Let us consider the sequence  $f_0\equiv0$, $f_j={\cal S}(f_{j-1})$. By 
Proposition~\ref{st:szhim_condition}, it converges uniformly to $f$.

It is evident, that $f_1$ is a constant on all intervals $]\alpha_k,\alpha_{k+1}[$, $k=1,\ldots,n$. Further, since only one of $d_k$s differs from zero, the function $f_2$ is piecewise constant on the interval
$]\alpha_m,\alpha_{m+1}[=S_m(]0,1[)$ and coincides with $f_1$ out of this interval. Analogously, $f_{j+1}$
is piecewise constant on the interval $S^j_m(]0,1[)$ and coincides with $f_j$ out of this interval. Moreover, the following evident equality is valid:
\begin{equation}\label{osc}
\underset{S^j_m(]0,1[)}{\mbox{Var}} f_{j+1}=d_m\cdot
\underset{S^{j-1}_m(]0,1[)}{\mbox{Var}} f_j.
\end{equation}
Thus, the limit function $f$ is piecewise constant and has finite number of values out of any interval $S^j_m(]0,1[)$, $j\in\mathbb N$. These intervals generate a sequence contracting to a point $\widehat x$, which is singular for $f$ in a sense. However, by (\ref{osc}) $f$ is continuous at $\widehat x$. The boundedness of $\underset{]0,1[}{\mbox{Var}} f$ also follows from (\ref{osc}). The proof is complete.
\end{proof}

Straightforward calculation shows that
\begin{equation}\label{eq:osob_tochka}
\widehat x=\dfrac{\alpha_{m+e_m}}{1-(-1)^{e_m}a_{m}}.
\end{equation}
In particular, (\ref{eq:osob_tochka}) implies that  $\widehat x=0$ iff  $m=1$ and $e_1=0$. Similarly,
$\widehat x=1$ iff $m=n$ and $e_n=0$.\medskip

Now, we exclude from consideration the trivial cases. Namely, we assume that $f$ has jumps at all points
$\alpha_k$, $k=2,\ldots,n$. Further, we define $f$ at discontinuity points as left-continuous function 
and define degenerately self-similar discrete signed measure $\mu$ by the formula 
$\mu([a,b])=f(b+0)-f(a)$, $0\le a\le b\le1$.

\begin{tm}\label{tm:def_string} (see also~\cite{Sh2}) The signed measure $\mu$ is a probability measure 
iff the following conditions are valid:
\begin{enumerate}
\item $d_1e_1+\beta_1=0$, $d_n(1-e_n)+\beta_n=1$;
\item $0<(-1)^{e_m}d_m<1$;
\item $\beta_k<\beta_{k+1}$, $k=1,\dots,n-1$;
\item $\beta_{m-1} <d_{m}\beta_n+\beta_{m}<\beta_{m+1}$
\end{enumerate}
(for $m=1$, only right inequality in item 4 holds; for $m=n$, only left inequality holds).
\end{tm}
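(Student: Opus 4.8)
The plan is to characterize when the signed measure $\mu$ is a probability measure by unravelling the self-similar structure of $f$ into explicit statements about its values and jumps. First I would compute the values of the self-similar function $f$ on the constancy intervals. Since only $d_m$ is nonzero, on each interval $]\alpha_k,\alpha_{k+1}[$ with $k\ne m$ the fixed-point equation $\mathcal{S}(f)=f$ forces $f$ to equal the constant $\beta_k$ (because the term $d_k f(S_k^{-1}(t))$ vanishes). On the single active interval $]\alpha_m,\alpha_{m+1}[$ the function reproduces a shrunk, possibly reflected, copy of $f$ itself scaled by $d_m$ and shifted by $\beta_m$; iterating this gives the full (countable) value set and, crucially, the limiting value of $f$ at the singular point $\widehat{x}$. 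Evaluating the self-consistency of $f$ at the endpoints $0$ and $1$ and at $\widehat x$ should produce the boundary normalizations in item 1 and identify the quantity $d_m\beta_n+\beta_m$ appearing in item 4 as the value that $f$ takes on the subinterval adjacent to $\widehat x$ inside $]\alpha_m,\alpha_{m+1}[$.

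Next I would translate ``$\mu$ is a probability measure'' into two requirements: that the total mass is $1$, and that $\mu$ is nonnegative, i.e.\ that $f$ is nondecreasing. For the total mass I would use $\mu([0,1])=f(1+0)-f(0)$ together with the endpoint values of $f$; imposing $f(0)=0$ and $f(1+0)=1$ yields the two equalities of item 1. For nonnegativity I would demand that $f$ be monotone nondecreasing, which on the inert intervals means the constants are increasing, giving $\beta_k<\beta_{k+1}$ (item 3), and which near the active interval means the jump at $\alpha_m$ and the self-similar continuation must also respect monotonicity. The condition $0<(-1)^{e_m}d_m<1$ in item 2 is precisely the statement that the self-similar copy inside $]\alpha_m,\alpha_{m+1}[$ increases (the sign factor $(-1)^{e_m}$ accounting for the possible orientation reversal $S_m$), so that the copied graph rises rather than falls; combined with $|d_m|<1$ from Proposition~\ref{st:szhim_condition} this pins down both sign and size of $d_m$.

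The remaining item 4 is where the real work lies, and I expect it to be the main obstacle. Here I must ensure monotonicity across the transition from the inert intervals into the active interval $]\alpha_m,\alpha_{m+1}[$. The value of $f$ just to the left of the active block is $\beta_{m-1}$ and just to the right is $\beta_{m+1}$, while inside the block $f$ starts (after the jump at $\alpha_m$) at a value determined by the affine copy of $f$: this is exactly $d_m\beta_n+\beta_m$ (the scaled terminal value of $f$, which is $\beta_n$ before normalization, shifted by $\beta_m$). Monotonicity then forces $\beta_{m-1}<d_m\beta_n+\beta_m<\beta_{m+1}$. The degenerate endpoint cases $m=1$ and $m=n$ require only the one-sided inequality since one of the neighboring inert intervals is absent, matching the parenthetical remark.

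For the converse direction I would verify that conditions 1--4 are sufficient: assuming them, I reconstruct $f$ via the iteration $f_j=\mathcal{S}(f_{j-1})$ of Lemma~\ref{tm:schet}, check inductively that each $f_j$ is nondecreasing and that the jump structure is preserved, and pass to the uniform limit to conclude that $f$ is nondecreasing with $f(0)=0$, $f(1+0)=1$. This guarantees $\mu\ge 0$ and $\mu([0,1])=1$, so $\mu$ is a probability measure. The induction step relies on items 2 and 4 to handle the active interval and on item 3 to handle the inert intervals, closing the equivalence.
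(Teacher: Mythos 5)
Your proposal is correct and follows essentially the same route as the paper: item 1 comes from the normalization $f(0)=0$, $f(1)=1$, items 2--4 from requiring monotonicity of the iterates $f_j=\mathcal{S}(f_{j-1})$, and sufficiency is obtained by induction on these iterates followed by passage to the uniform limit. One small imprecision worth noting: the value $d_m\beta_n+\beta_m$ is the value of $f$ on the first-generation subinterval of $]\alpha_m,\alpha_{m+1}[$ adjacent to $\alpha_{m+1}$ when $e_m=0$ and adjacent to $\alpha_m$ when $e_m=1$ (not the subinterval adjacent to $\widehat x$, and it is the starting value after the jump at $\alpha_m$ only in the reflected case), but since in either orientation this value is attained inside the active block, monotonicity still forces the two-sided inequality of item 4 exactly as you conclude.
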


\begin{proof}
Item 1 is necessary and sufficient to satisfy the equalities $f(0)=0$, $f(1)=1$, in other words,  $\mu([0,1])=1$. Further, consider the sequence $f_j$ introduced in Lemma~\ref{tm:schet}. Obviously, 
item 3 is necessary for $f_1$ to increase at discontinuity points. Condition 2 is necessary for nondecreasing of $f_2\big|_{S_m(]0,1[)}$. Next, if conditions 1-3 hold then condition 4 is necessary 
for $f_2$ to increase while crossing points $\alpha_m$ and  $\alpha_{m+1}$. Finally, items 2-4 provide 
the monotonicity of all functions $f_j$, $j\in\mathbb N$, and thus, the monotonicity of $f$.
\end{proof}

\begin{rem}
Evidently, the Hausdorff dimension of $\mu$ support is equal to zero. Therefore, 
\textbf{the spectral dimension} of $\mu$ (see \cite{F}, \cite[Sec. 5]{Na3}) is also equal to zero. 
Note that in \cite{VSh4}, \cite{Sh2} the primitive  $f$ of $\mu$ is called 
\textbf{the self-similar function of zero spectral order}.
\end{rem}

The figure illustrates the graph of function $f$ with self-similar parameters: $n=3$; $\alpha_1=0$, $\alpha_2=0.3$, $\alpha_3=0.8$,
$\alpha_4=1$; $\beta_1=0$, $\beta_2=1/3$, $\beta_3=1$; $m=2$, $d_2=1/3$, $e_2=0$. 
Formula~(\ref{eq:osob_tochka}) gives $\hat x=0.6$.\bigskip

\begin{center}
\begin{picture}(300,250)
\put(0,20){\vector(1,0){250}}
\put(0,20){\vector(0,1){240}}

\linethickness{0.5mm}
\put(0,20){\line(1,0){72}}
\put(240,240){\line(-1,0){48}}

\put(72,88){\line(1,0){36}}
\put(192,178){\line(-1,0){24}}

\put(108,112){\line(1,0){18}}
\put(168,148){\line(-1,0){12}}

\put(126,123){\line(1,0){9}}
\put(156,137){\line(-1,0){6}}

\put(135,126){\line(1,0){5}}
\put(150,134){\line(-1,0){3}}

\put(140,128){\line(1,0){2.5}}
\put(147,132){\line(-1,0){1.5}}

\put(142.5,129.5){\line(1,0){1.25}}
\put(145.5,130.5){\line(-1,0){0.75}}

\thinlines
\multiput(0,240)(8,0){24}{\line(1,0){3}}
\put(-7,237){\small 1}
\put(240,7){\small 1}
\multiput(240,18)(0,8){28}{\line(0,1){3}}
\multiput(144.5,18)(0,7.7){15}{\line(0,1){3}}
\put(138,7){\small 0.6}

\multiput(72,18)(0,8){9}{\line(0,1){3}}\put(66,7){\small 0.3}
\multiput(192,18)(0,7.7){21}{\line(0,1){3}}\put(186,7){\small 0.8}

\end{picture}
\end{center}

\section{Spectral asymptotics of boundary value problems associated with degenerate self-similar measures}

Let us consider a self-adjoint, positive definite operator ${\cal L}$ generated by the differential
expression
\begin{equation}\label{operator}
{\cal L}y\equiv (-1)^{\ell}y^{(2\ell)}+\left({\cal P}_{\ell-1}y^{(\ell-1)}
\right)^{(\ell-1)}+\dots+{\cal P}_0y
\end{equation}
with suitable boundary conditions. Here ${\cal P}_i\in L_1(0,1)$, $i=0,\dots,\ell-1$.

We are interested in the eigenvalues asymptotic behavior of the BVP
\begin{equation}\label{BVP}
\lambda {\cal L}y=\mu y\quad (+\ \mbox {boundary conditions}),
\end{equation}
where $\mu$ is a probability measure constructed in Section 2.

If $G_X$ is the Green function for operator ${\cal L}$ then (\ref{BVP}) is equivalent to (\ref{int_eq}).
We denote $\lambda_j^{({\cal L}_{\mu})}$ the eigenvalues of (\ref{BVP}) enumerated in decreasing order
and repeated according to their multiplicity.

Recall (see, e.g., \cite[Sec. 10.2]{BS2}), that the counting function of eigenvalues of (\ref{BVP}) 
can be expressed in terms of quadratic form  $Q_{\cal L}$ of the operator ${\cal L}$ as follows:
\begin{equation}\label{quadr_form}
{\cal N}_{{\cal L}_{\mu}}(\lambda)\equiv\#\{j:\,\lambda_j^{({\cal L}_{\mu})}>\lambda\}=
\sup\dim\{{\mathfrak H}\subset {\cal D}(Q_{\cal L}):\ 
\lambda Q_{\cal L}(y,y)<\int\limits_0^1|y(t)|^2\mu(dt) \ \ \mbox{on}\ \ {\mathfrak H}\}.
\end{equation}

Now we can formulate the main result of this section.

\begin{tm}\label{spectral_asympt}
Given degenerate self-similar probability measure $\mu$, we have
\begin{equation}\label{count_func}
{\cal N}_{{\cal L}_{\mu}}(\lambda)\sim (n-1)\,\frac {\ln(\frac 1{\lambda})}{\ln(q)},
\qquad \lambda\to+0,
\end{equation}
where $q=\frac 1{d_m\cdot a_m^{2\ell-1}}>1$.
\end{tm}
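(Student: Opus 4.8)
The plan is to combine the variational description (\ref{quadr_form}) with the self-similarity of $\mu$, which is active only on $[\alpha_m,\alpha_{m+1}]$. First I would reduce to the model form. The operator is positive definite, so $\lambda_j^{({\cal L}_\mu)}$ are exactly the values of the Rayleigh quotient $\int_0^1|y|^2\,\mu(dt)/Q_{\cal L}(y,y)$. Since ${\cal P}_i\in L_1(0,1)$, the lower order terms of $Q_{\cal L}$ are form-subordinate to the leading term $\int_0^1|y^{(\ell)}|^2\,dt$ with arbitrarily small relative bound; as the regime $\lambda\to+0$ is governed by trial functions concentrated at small scales, where the top order term dominates, the Weyl-type theorem established in the Appendix shows that the asymptotics of ${\cal N}_{{\cal L}_\mu}$ coincide with those for $(-1)^{\ell}y^{(2\ell)}$, and that the concrete self-adjoint boundary conditions shift the count only by $O(1)$. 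The decisive computation is the effect of $S_m$ on the Rayleigh quotient: writing $z(t)=y(S_m(t))$ one gets $\int_{\alpha_m}^{\alpha_{m+1}}|y^{(\ell)}|^2\,ds=a_m^{1-2\ell}\int_0^1|z^{(\ell)}|^2\,dt$, while the mass carried by $[\alpha_m,\alpha_{m+1}]$ is $(-1)^{e_m}d_m$ times that of the whole $\mu$; hence restricting the problem to $[\alpha_m,\alpha_{m+1}]$ multiplies every Rayleigh quotient by $(-1)^{e_m}d_m\,a_m^{2\ell-1}$, a number in $(0,1)$ by item (2) of Theorem~\ref{tm:def_string}, whose reciprocal is $q$. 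Equivalently, the central copy has counting function ${\cal N}(q\lambda)$.

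For the upper bound I would relax the matching conditions at $\alpha_m$ and $\alpha_{m+1}$ to Neumann type. Dropping constraints enlarges ${\cal D}(Q_{\cal L})$, so the supremum in (\ref{quadr_form}) can only grow and ${\cal N}_{{\cal L}_\mu}\le{\cal N}^N$. The cut decouples $[0,1]$ into two end pieces $[0,\alpha_m]$, $[\alpha_{m+1},1]$ and the central copy; assigning the two boundary atoms at $\alpha_m,\alpha_{m+1}$ to the end pieces leaves the central measure equal to the self-similar copy, with count ${\cal N}^N(q\lambda)$, while the end pieces together carry only $n-1$ atoms and hence contribute at most $n-1$ positive Rayleigh quotients. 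This gives ${\cal N}^N(\lambda)\le(n-1)+{\cal N}^N(q\lambda)$; iterating until $q^{J}\lambda$ exceeds the largest eigenvalue, where ${\cal N}^N$ vanishes, yields ${\cal N}_{{\cal L}_\mu}(\lambda)\le(n-1)\ln(1/\lambda)/\ln q+O(1)$.

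The matching lower bound is the hard part, and precisely here the renewal method of the earlier papers breaks down. Crude Dirichlet bracketing is too wasteful: forcing $y(\alpha_m)=y(\alpha_{m+1})=0$ kills the two boundary atoms at every level, and since these accumulate at $\widehat x$ the Dirichlet count stabilises only to $n-3$ per scale, leaving a gap of $2$ at each of the $\sim\ln(1/\lambda)$ scales. To reach the sharp constant $n-1$ I would follow the improved variational technique of \cite{VSh4}, building an explicit trial subspace. At level $j=1,\dots,J$ the measure carries exactly $n-1$ atoms, the points $S_m^{j-1}(\alpha_k)$, $k=2,\dots,n$, whose masses are all of order $((-1)^{e_m}d_m)^{j-1}$. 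Around each atom I would place a smooth bump, equal to $1$ at the atom and vanishing together with its first $\ell-1$ derivatives at the ends of its support, the support width being comparable to the distance to the nearest neighbouring atom (of order $a_m^{j-1}$, and of order $a_m^{j}$ for the two atoms closest to $\widehat x$). These supports are pairwise disjoint, so the bumps are linearly independent and $Q_{\cal L}$-orthogonal. A bump of width $w$ satisfies $Q_{\cal L}(\varphi,\varphi)\sim w^{1-2\ell}$ and $\int_0^1|\varphi|^2\,\mu\ge c\,((-1)^{e_m}d_m)^{j-1}$, so its Rayleigh quotient is at least $c\,((-1)^{e_m}d_m\,a_m^{2\ell-1})^{j-1}=c\,q^{-(j-1)}$ and therefore exceeds $\lambda$ once $j\le J\sim\ln(1/\lambda)/\ln q$. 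Inserting this $(n-1)J$-dimensional space into (\ref{quadr_form}) gives ${\cal N}_{{\cal L}_\mu}(\lambda)\ge(n-1)J(1+o(1))$, which together with the upper bound establishes (\ref{count_func}).

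I expect the real work to be the uniform control of the localized trial functions near $\widehat x$: one must check that the Rayleigh quotient of the scale-$j$ bump is $\ge c\,q^{-(j-1)}$ with a constant $c$ independent of $j$ (this is where the exponent $2\ell-1$ of the high order operator enters the bump estimates), and that neither the subordinate lower order terms nor the non-model boundary conditions spoil the count at any of the $\sim\ln(1/\lambda)$ scales at once. Self-similarity is what saves the day: every scale-$j$ quantity is an exact $S_m^{j-1}$-rescaling of the corresponding scale-$1$ quantity, so a single estimate at the base scale propagates to all levels, and this uniform propagation is exactly the device that replaces the failed renewal equation.
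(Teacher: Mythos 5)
Your proposal is correct in substance, but it reaches the two-sided recursion by a genuinely different route than the paper. The shared ingredients are the scaling identity (restriction to $[\alpha_m,\alpha_{m+1}]$ multiplies Rayleigh quotients by $q^{-1}$, the paper's (\ref{eq:mashtab})) and the final reduction of the general operator to $(-1)^{\ell}y^{(2\ell)}$ with Dirichlet conditions via Lemma~\ref{lem:Weyl}. For the upper bound the paper decomposes $\mathcal H=\mathcal H_1\oplus(\mathcal H_2\dotplus(\widehat{\mathcal H}_1\oplus\widehat{\mathcal H}_2))$ with a spline space of dimension $n-1+\Delta$, obtains the excess $\Delta$ per scale, and then needs the extra device of replacing $\mathcal S$ by $\mathcal S^M$ to shrink the excess to $\Delta/M$; your Neumann bracketing combined with the rank bound (the two end pieces carry only the $n-1$ atoms $\alpha_2,\dots,\alpha_n$, hence admit at most $n-1$ positive Rayleigh quotients) gives the sharp constant $n-1$ per scale in one step. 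For the lower bound the paper proves the Schur-complement (``transfer function'') Lemma~\ref{lem:transfer_function}, exploiting the invertibility of the rank-$(n-1)$ operator $\cal C$ and the bound $\|{\cal B}-\widetilde{\cal B}\|\le c\lambda^2$, whereas you exhibit $(n-1)J$ pairwise disjoint bumps, one per atom per scale, with uniformly controlled Rayleigh quotients. Your route is more elementary and makes the origin of the constant $n-1$ geometrically transparent; the paper's route avoids pointwise constructions near $\widehat x$ and isolates the scale coupling in a single operator identity. Two points to pin down so that your argument is airtight: (i) for your upper-bound recursion to be self-consistent, ${\cal N}^N$ must denote the counting function of the \emph{fully} free problem (natural conditions at $0$ and $1$ as well as at the cuts), since the rescaled central piece is free at both of its endpoints; the free form then has an $\ell$-dimensional kernel, so ${\cal N}^N$ does not actually vanish for large $\lambda$ but stays bounded by $2\ell$ there, which still terminates the iteration with an $O(1)$ remainder; (ii) the global disjointness of the bumps near $\widehat x$ requires exactly the width choice you indicate (half-widths a fixed fraction of the distance to the nearest atom of \emph{any} level, which is of order $a_m^{j}$ for the two innermost level-$j$ atoms), and the resulting loss is only the fixed factor $a_m^{2\ell-1}$ in the Rayleigh quotient, so the constant is indeed uniform in $j$ by exact self-similarity, as you claim.
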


\begin{rem} For the operator  ${\mathfrak L}y=-y''$ with the Dirichlet boundary conditions this theorem was proved in \cite{VSh4}. Moreover,
more precise result on spectrum structure of operator ${\cal L}_{\mu}$ was obtained in this case. However,
this result is not sufficient to receive sharp small ball asymptotics.
\end{rem}

\begin{proof}
First, we consider a particular case of operator ${\cal L}$, 
without lower-order terms and with the Dirichlet boundary conditions:
\begin{equation}\label{operator1}
\lambda {\mathfrak L}y\equiv\lambda (-1)^{\ell}y^{(2\ell)}=\mu y, \qquad
y(0)=y(1)=\dots=y^{({\ell-1})}(0)=y^{({\ell-1})}(1)=0.
\end{equation}

Denote by $\mathcal H$ the energy space of the operator $\mathfrak L$:
$${\cal H} ={\stackrel {o}{W}\!\vphantom{W}_2^{\ell}}(0,1);\qquad
[y,y]_{\cal H}=Q_{\mathfrak L}(y,y)=\int\limits_0^1 |y^{(\ell)}|^2.
$$
We define two subspaces in $\mathcal H$:
\begin{gather*}
\mathcal H_1:=\{y\in\mathcal H:\ y(t)\equiv 0 \text{ if }
t\in [\alpha_{m}, \alpha_{m+1}],\; y(\alpha_k)=0,\; k=2,\ldots,n\};\\
\mathcal H_2:=\{y\in\mathcal H:\ y(t)\equiv 0 \text{ if } t\not\in [\alpha_{m}, \alpha_{m+1}]\}.
\end{gather*}

Let $[\gamma_1,\gamma_2]$ be any subsegment in $]\alpha_m, \alpha_{m+1}[$ containing 
${\rm supp} (\mu)\cap\,]\alpha_m,\alpha_{m+1}[$. For instance, one can take
$$
\gamma_1=\alpha_m+a_m a_{1+e_m(n-1)};\qquad
\gamma_2=\alpha_{m+1}-a_m a_{n-e_m(n-1)}.
$$
We also need a subspace $\widehat{\mathcal H}\subset \mathcal H$ consisting of the
order $2\ell$ polynomial splines with $n+3$ interpolation points
$\alpha_k$, $k=1,\dots,n+1$, $\gamma_1$ and $\gamma_2$, satisfying the following conditions:

1. These splines vanish in $[\gamma_1,\gamma_2]$.

2. They have continuous derivatives up to the $(\ell-1)$-th order at $\alpha_m$, $\alpha_{m+1}$, 
$\gamma_1$, $\gamma_2$, and up to the $(2\ell-2)$-th order at other interpolation points. 

It is easy to see that $\dim \widehat{\mathcal H}=n-1+\Delta$ where
$$
\Delta=2(\ell-1)\quad\mbox{as}\quad m\ne 1,n;\qquad 
\Delta=\ell-1\quad\mbox{as}\quad m=1\quad\mbox{or}\quad m=n.
$$
It is also easy to check that $\mathcal H=\mathcal H_1\oplus(\mathcal H_2\dotplus\widehat{\mathcal H})$.

In turn, we decompose space $\widehat{\mathcal H}$ into the orthogonal sum of subspaces 
$\widehat{\mathcal H}=\widehat{\mathcal H}_1\oplus\widehat{\mathcal H}_2$ where
$$
\widehat{\mathcal H}_1=\{y\in\widehat{\mathcal H}: y(\alpha_k)=0,\quad k=2,\ldots,n\}.
$$
It is easily seen that
\begin{equation}\label{eq:dim_perp}
\dim\widehat{\mathcal H}_1=\Delta;\qquad
\dim\widehat{\mathcal H}_2=n-1.
\end{equation}

The quadratic form $\int_0^1|y(t)|^2\mu(dt)$ defines on $\mathcal H$ a compact self-adjoint operator
$\cal A$. Its eigenvalues certainly coincide with $\lambda_j^{({\mathfrak L}_{\mu})}$.

Denote by ${\cal B}$ and ${\cal C}$ the restrictions of ${\cal A}$ on subspaces $\mathcal H_2$ and $\widehat{\mathcal H}_2$, respectively (obviously, by construction of $\mu$ the restrictions of 
this operator on $\mathcal H_1$ and $\widehat{\mathcal H}_1$ are trivial). Then, under decomposition 
$\mathcal H=\mathcal H_1\oplus(\mathcal H_2\dotplus (\widehat{\mathcal H}_1\oplus\widehat{\mathcal
H}_2))$, the problem (\ref{BVP}) can be rewritten in matrices as follows:
\begin{equation}\label{eq:privodimost}
\lambda\begin{pmatrix}
I & 0 & 0 & 0\\
0 & I & {\cal P}_1^* & {\cal P}_2^*\\
0 & {\cal P}_1 & I & 0\\
0 & {\cal P}_2 & 0 & I
\end{pmatrix}
\begin{pmatrix}
u\\
x\\
y\\
z
\end{pmatrix}=
\begin{pmatrix}
0 & 0 & 0 & 0\\
0 & {\cal B} & 0 & 0\\
0 & 0 & 0 & 0\\
0 & 0 & 0 & {\cal C}
\end{pmatrix}
\begin{pmatrix}
u\\
x\\
y\\
z
\end{pmatrix},
\end{equation}
where $u\in \mathcal H_1$, $x\in \mathcal H_2$, $y\in \widehat{\mathcal H}_1$, 
$z\in \widehat{\mathcal H}_2$, while ${\cal P}_i$ are orthoprojectors
$\mathcal H_2\to \widehat{\mathcal H}_i$, $i=1,2$.

Formula (\ref{eq:privodimost}) shows that, to obtain asymptotics of ${\cal N}_{\cal A}(\lambda)$, we
need to consider the problem (\ref{BVP}) only in the space
$\mathcal H_2\dotplus \widehat{\mathcal H}_2$.\medskip

Let $z\in{\cal H}_2$. Setting $y(t)=z(S_m(t))\in{\cal H}$, by the homogeneity we have
$$
[z,z]_{\cal H}=a_m^{-(2\ell-1)}[y,y]_{\cal H},
$$
while the self-similarity of $\mu$ gives
$$
[{\cal B}z,z]_{\cal H}=\int_{\alpha_m}^{\alpha_{m+1}} |z(t)|^2 \mu(dt)=d_m\int_0^1
|y(t)|^2 \mu(dt)=[{\cal A}y,y]_{\cal H}.
$$
Hence (\ref{quadr_form}) implies for $\lambda>0$
\begin{equation}\label{eq:mashtab}
{\cal N}_{\cal B}(\lambda)={\cal N}_{\cal A}
(q\lambda).
\end{equation}

\begin{lem}\label{lem:transfer_function} Let $\lambda \in\mathbb{R}$ be such that the operator
${\cal C}-\lambda I$ is invertible in  $\widehat{\mathcal H}_2$. Then
\begin{equation}\label{eq:NA<transfer_func+}
{\cal N}_{\cal A}(\lambda)\ge  {\cal N}_{\widetilde{\cal B}}(\lambda)+{\cal N}_{\cal C}(\lambda),
\end{equation}
where $\widetilde{\cal B}={\cal B}-\lambda^2 {\cal P}_2^*({\cal C}-\lambda I)^{-1}{\cal P}_2$.
\end{lem}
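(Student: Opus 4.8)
The plan is to invoke the variational characterization (\ref{quadr_form}): it suffices to exhibit a subspace $\mathfrak H\subset{\cal D}(Q_{\mathfrak L})$ of dimension ${\cal N}_{\widetilde{\cal B}}(\lambda)+{\cal N}_{\cal C}(\lambda)$ on which the form $\Phi(w):=[{\cal A}w,w]_{\cal H}-\lambda[w,w]_{\cal H}$ is strictly positive. Since the $\mathcal H_1$-component contributes only to the kernel of $\cal A$, I would work inside the reduced space $\mathcal H_2\dotplus\widehat{\mathcal H}$, writing a generic element as $w=x+y+z$ with $x\in\mathcal H_2$, $y\in\widehat{\mathcal H}_1$, $z\in\widehat{\mathcal H}_2$. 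Reading the Gram matrix and the operator matrix off (\ref{eq:privodimost}) gives
\begin{equation*}
\Phi(w)=[{\cal B}x,x]+[{\cal C}z,z]-\lambda\bigl([x,x]+[y,y]+[z,z]+2\,{\rm Re}\,[{\cal P}_1x,y]+2\,{\rm Re}\,[{\cal P}_2x,z]\bigr),
\end{equation*}
where all inner products are those of $\cal H$.

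The computation then eliminates the two auxiliary variables. First I would dispose of $y$: the $y$-dependent part $-\lambda[y,y]-2\lambda\,{\rm Re}\,[{\cal P}_1x,y]$ is maximized at $y=-{\cal P}_1x$, where it equals $\lambda\|{\cal P}_1x\|^2\ge0$; restricting to elements with this choice of $y$ only raises $\Phi$ and eliminates ${\cal P}_1$ entirely, which is precisely why it is absent from $\widetilde{\cal B}$. Next I would treat $z$ by a Schur-complement completion of the square. Using that ${\cal C}-\lambda I$ is invertible (the sole hypothesis of the lemma), I substitute $z=w'+\lambda({\cal C}-\lambda I)^{-1}{\cal P}_2x$ with $w'\in\widehat{\mathcal H}_2$ free. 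Relying on the self-adjointness of $({\cal C}-\lambda I)^{-1}$ and the identity $({\cal C}-\lambda I)z_0=\lambda{\cal P}_2x$ for $z_0=\lambda({\cal C}-\lambda I)^{-1}{\cal P}_2x$, the cross terms cancel and the form decouples:
\begin{equation*}
\Phi(w)=\bigl([\widetilde{\cal B}x,x]-\lambda[x,x]\bigr)+\lambda\|{\cal P}_1x\|^2+[({\cal C}-\lambda I)w',w'].
\end{equation*}

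Finally I would assemble the test subspace. Let $\mathfrak G_1\subset\mathcal H_2$ be a subspace of dimension ${\cal N}_{\widetilde{\cal B}}(\lambda)$ on which $[\widetilde{\cal B}x,x]>\lambda[x,x]$, and let $\mathfrak G_2\subset\widehat{\mathcal H}_2$ be the positive subspace of the (finite, self-adjoint) operator ${\cal C}-\lambda I$, of dimension ${\cal N}_{\cal C}(\lambda)$, on which $[({\cal C}-\lambda I)w',w']>0$. I define $\mathfrak H$ as the image of $\mathfrak G_1\oplus\mathfrak G_2$ under the map $(x,w')\mapsto x-{\cal P}_1x+\lambda({\cal C}-\lambda I)^{-1}{\cal P}_2x+w'$; since the sum $\mathcal H_2\dotplus\widehat{\mathcal H}$ is direct, the $\mathcal H_2$-component recovers $x$ and hence this map is injective, so $\dim\mathfrak H={\cal N}_{\widetilde{\cal B}}(\lambda)+{\cal N}_{\cal C}(\lambda)$. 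On $\mathfrak H\setminus\{0\}$ the last displayed formula shows $\Phi>0$: each of the three summands is nonnegative, and whenever $x\ne0$ the first is strictly positive while whenever $x=0$ one has $w'\ne0$ and the third is strictly positive. Then (\ref{quadr_form}) yields (\ref{eq:NA<transfer_func+}). The one point demanding care is the indefiniteness of ${\cal C}-\lambda I$: the completion of the square is legitimate only by invertibility, and $w'$ must be chosen in the positive subspace — of dimension exactly ${\cal N}_{\cal C}(\lambda)$, not all of $\widehat{\mathcal H}_2$ — which is the crux that makes the two counting functions add.
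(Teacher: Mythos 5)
Your proof is correct and follows essentially the same route as the paper's: the identical completion of the square in the $z$-variable producing the Schur complement $\widetilde{\cal B}$, followed by the variational principle (\ref{quadr_form}) applied to the decoupled form; you merely spell out the construction of the test subspace (positive subspace of ${\cal C}-\lambda I$, injectivity via the direct sum) that the paper compresses into ``immediately follows.'' Incidentally, your substitution $z_0=\lambda({\cal C}-\lambda I)^{-1}{\cal P}_2x$ is the one consistent with the factor $\lambda^2$ in the definition of $\widetilde{\cal B}$; the paper's displayed $w=z-({\cal C}-\lambda I)^{-1}{\cal P}_2x$ appears to omit that factor of $\lambda$.
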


\begin{proof}
Let $X=x+y+z$, $x\in \mathcal H_2$, $y\in \widehat{\mathcal H}_1$, $z\in \widehat{\mathcal H}_2$. 
From decomposition~\eqref{eq:privodimost}, we derive by straightforward calculation 
\begin{equation*}\label{eq:transfer_function}
[{\cal A}X,X]_{\cal H}-\lambda[X,X]_{\cal H}=
[\widetilde{\cal B}x,x]_{\cal H}-\lambda[x+y,x+y]_{\cal H}+
[{\cal C}w,w]_{\cal H}-\lambda[w,w]_{\cal H},
\end{equation*}
where $w=z-({\cal C}-\lambda I)^{-1}{\cal P}_2x$. The statement immediately follows from this relation.
\end{proof}

Now we note that for any $z\in \widehat{\mathcal H}_2$,
$$
[{\cal C}z,z]_{\cal H}=\int_0^1 |z(t)|^2\mu(dt)=\sum\limits_{k=2}^{n}\zeta_k\cdot|z(\alpha_k)|^2,
$$
where $\zeta_k=\mu(\{\alpha_k\})$. Since measure $\mu$ is assumed to be nontrivial, $\zeta_k>0$ 
for all $k=2\ldots,n$. This implies $\mbox{rank}({\cal C})=n-1$. By (\ref{eq:dim_perp}) this gives 
the invertibility of operator $\cal C$, that in turn implies
$$
{\cal N}_{\cal C}(\lambda)\equiv n-1;\qquad \|{\cal B}-\widetilde{\cal B}\|\le c\lambda^2
$$
for sufficiently small $\lambda$. In view of these formulas the inequality (\ref{eq:NA<transfer_func+}) 
provides the following relation for arbitrary $\varepsilon>0$ and $\lambda<\lambda_0(\ep)$:
$${\cal N}_{\cal A}(\lambda)\ge{\cal N}_{\cal B}(\lambda+c\lambda^2)+n-1\ge{\cal N}_{\cal B}((1+\ep)\lambda)+n-1.
$$
On the another hand, relations (\ref{eq:privodimost}) and (\ref{eq:dim_perp}) give an upper estimate:
$${\cal N}_{\cal A}(\lambda)\le  {\cal N}_{\cal B}(\lambda)+n-1+\Delta.
$$
Combining these estimates we derive, subject to (\ref{eq:mashtab}),
$${\cal N}_{\cal A}(q(1+\ep)\lambda)+n-1\le{\cal N}_{\cal A}(\lambda)\le
{\cal N}_{\cal A}(q\lambda)+n-1+\Delta,
$$
as  $\lambda<\lambda_0(\ep)$.
Iterating these inequalities we obtain two-sided estimate for  ${\cal N}_{\cal A}(\lambda)$:
\begin{equation}\label{ocenka}
(n-1)\,\frac {\ln(\frac 1{\lambda})}{\ln(q(1+\ep))}-c(\ep) \le{\cal N}_{\cal A}(\lambda)\le
(n-1+\Delta)\,\frac {\ln(\frac 1{\lambda})}{\ln(q)}+c(\ep).
\end{equation}

Now we note that the primitive of $\mu$ is a fixed point not only for the similarity operator 
${\mathcal S}$ but also for any its power. If one consider the original problem with replacing 
${\mathcal S}$ by ${\mathcal S}^M$, the problem (\ref{operator1}) doesn't change but parameters 
$q$ and $n$ replace by $q^M$ and $M(n-1)+1$, respectively. Therefore, the estimate (\ref{ocenka}) 
takes the form
$$M(n-1)\,\frac {\ln(\frac 1{\lambda})}{\ln(q^M(1+\ep))}-c(\ep) \le{\cal N}_{\cal A}(\lambda)\le
(M(n-1)+\Delta)\,\frac {\ln(\frac 1{\lambda})}{\ln(q^M)}+c(\ep),
$$
or
\begin{equation}\label{ocenka1}
(n-1)\,\frac {\ln(\frac 1{\lambda})}{\ln(q(1+\ep))}-c(\ep) \le{\cal N}_{\cal A}(\lambda)\le
(n-1+\frac {\Delta}M)\,\frac {\ln(\frac 1{\lambda})}{\ln(q)}+c(\ep).
\end{equation}
By the arbitrariness of $\ep$ and $M$ this immediately provides (\ref{count_func}).\medskip

Now we consider a general case. Integrating by parts we check that the quadratic form $Q_{\cal L}$ 
can be written as follows:
\begin{equation}\label{quadrform}
\gathered
Q_{\cal L}(y,y)=\int\limits_0^1\left[\left|y^{(\ell)}\right|^2+
\sum\limits_{i=0}^{\ell-1}{\cal P}_i\left|y^{(i)}\right|^2\right]\ dt\ +\
Q_0(y,y),\\
\stackrel{ o\ } {W_2^{\ell}}(0,1)\subset {\cal D}(Q_{\cal L})\subset
W_2^{\ell}(0,1),
\endgathered
\end{equation}
where the quadratic form $Q_0(y,y)$ contains boundary terms at the endpoints zero and one.

Consider auxiliary  quadratic form $Q_{\widetilde {\cal L}}$ with the same formal expression as 
$Q_{\cal L}$ and the same domain as $Q_{\mathfrak L}$:
$$Q_{\widetilde {\cal L}}(y,y)=Q_{\cal L}(y,y);\qquad {\cal D}
(Q_{\widetilde {\cal L}})={\cal D}(Q_{\mathfrak L})=\stackrel{ o\ }
{W_2^{\ell}}(0,1).$$

The difference of the operators ${\cal L}$ and $\widetilde {\cal L}$ is a finite-dimensional operator, 
and therefore
$${\cal N}_{{\cal L}_{\mu}}(\lambda)\sim {\cal N}_
{\widetilde {\cal L}_{\mu}}(\lambda), \qquad \lambda\to+0.$$

Further, integrating by parts we can estimate  the lower order terms in (\ref{quadrform}):
$$\left|Q_{\widetilde {\cal L}}(y,y)-Q_{\mathfrak L}(y,y)\right|
\le c\cdot\int\limits_0^1\left[\,\left|y^{(\ell-1)}y^{(\ell)}\right|+
\sum\limits_{i=0}^{\ell-1}\left|y^{(i)}\right|^2\right]\ dt.$$

This estimate shows  that $Q_{\widetilde {\cal L}}$ defines a metric which is a compact perturbation 
of the metric in $\cal H$. It was shown in the first part of the proof that the counting function 
${\cal N}_{{\cal A}}(\lambda)$ has the asymptotics (\ref{count_func}) and thus satisfies
the relation (\ref{asymptotics2}). By Lemma \ref{lem:Weyl} we obtain
$${\cal N}_{\widetilde {\cal L}_{\mu}}(\lambda)=
{\cal N}_{{\cal A}_1}(\lambda)\sim {\cal N}_{\cal A}(\lambda)=
{\cal N}_{{\mathfrak L}_{\mu}}(\lambda), \qquad \lambda\to+0,$$

\noindent and the proof is complete.
\end{proof}

\section{Small ball asymptotics. Examples}

To obtain the small ball asymptotics we use the following proposition:

\begin{prop} (\cite[Theorem 2]{Na4})
Let the counting function of the sequence  $(\lambda_j)$, $j\in\mathbb N$, has the asymptotics 
${\cal N}(\lambda)\sim\varphi(\lambda)$, as $\lambda\to +0$, where  $\varphi$ is slowly varying at zero,
i.e.
$$\lim\limits_{t\to +0}\frac {\varphi(ct)}{\varphi(t)}=1
\qquad\mbox{for any}\quad c>0.$$
Then, as $r\to +0$
\begin{equation}\label{vero}
\ln{\bf P}\left\{\sum_{j=1}^\infty\lambda_j\xi_j^2\le r
\right\} \sim -\,\frac 12 \int\limits_{\frac 1u}^1 \varphi(z)\frac {dz}z,
\end{equation}
where $u=u(r)$ is chosen satisfies
\begin{equation}\label{ur}
\frac {\varphi(\frac 1u)}{2u}\sim r, \qquad r\to +0.
\end{equation}
\end{prop}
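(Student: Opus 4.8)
The plan is to pass from the distribution of $S=\sum_{j=1}^\infty\lambda_j\xi_j^2$ to its Laplace transform and then invoke a Tauberian theorem of de~Bruijn type. Since the $\xi_j$ are independent standard normal, for $s>0$ one has $\mathbf{E}\,e^{-sS}=\prod_{j=1}^\infty(1+2s\lambda_j)^{-1/2}$, so that $\ell(s):=-\ln\mathbf{E}\,e^{-sS}=\tfrac12\sum_j\ln(1+2s\lambda_j)$. Writing this sum as a Stieltjes integral against the counting function and integrating by parts gives the two representations
\[
\ell(s)=\int_0^\infty\frac{s\,{\cal N}(\lambda)}{1+2s\lambda}\,d\lambda,\qquad
\ell'(s)=\int_0^\infty\frac{{\cal N}(\lambda)}{(1+2s\lambda)^2}\,d\lambda,
\]
which are the only places the eigenvalues enter. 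The exponential Chebyshev inequality $\mathbf{P}(S\le r)\le e^{sr}\mathbf{E}\,e^{-sS}=\exp(sr-\ell(s))$, optimised in $s$, yields the upper bound $\ln\mathbf{P}(S\le r)\le\inf_{s>0}(sr-\ell(s))$; the substance of the theorem is that this bound is sharp on the logarithmic scale.

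Next I would locate the saddle point. The infimum is attained at $s=s_*(r)$ solving $\ell'(s_*)=r$. Substituting $\lambda=w/(2s)$ in the integral for $\ell'$ and using ${\cal N}(\lambda)\sim\varphi(\lambda)$ with $\varphi$ slowly varying, the uniform convergence theorem for slowly varying functions together with Potter's bounds (the kernel $(1+w)^{-2}$ being integrable) gives $\ell'(s)\sim\frac1{2s}\,\varphi(\tfrac1{2s})\int_0^\infty\frac{dw}{(1+w)^2}=\frac{\varphi(1/(2s))}{2s}$. Comparing $\ell'(s_*)=r$ with the defining relation $\frac{\varphi(1/u)}{2u}\sim r$ identifies $u=2s_*$, so that the implicit parameter $u(r)$ in the statement is exactly twice the saddle point.

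Then I would evaluate the extremal value $\ell(s_*)-s_*r$. First, $s_*r\sim s_*\cdot\frac{\varphi(1/(2s_*))}{2s_*}=\tfrac12\varphi(1/u)$. The same substitution in the integral for $\ell(s_*)$, now with the \emph{non}-integrable kernel $(1+w)^{-1}$, forces a split at $\lambda=1/(2s_*)$: on $\lambda>1/(2s_*)$ one has $1+2s_*\lambda\approx 2s_*\lambda$ and the contribution is $\sim\tfrac12\int_{1/(2s_*)}^{1}\frac{\varphi(\lambda)}{\lambda}\,d\lambda$ (the upper limit may be taken to be $1$ up to a bounded term, since ${\cal N}$ vanishes beyond the largest eigenvalue), while on $\lambda<1/(2s_*)$ Karamata's theorem bounds the contribution by $O(\varphi(1/u))$. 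Setting $I(u)=\int_{1/u}^1\varphi(z)\frac{dz}{z}$ and substituting $z=e^{-t}$, the representation theorem for slowly varying functions shows $\varphi(e^{-t})=e^{o(t)}$, whence a short monotonicity/L'Hôpital argument gives $\varphi(1/u)=o(I(u))$. Consequently both $s_*r$ and the $\lambda<1/(2s_*)$ remainder are negligible against $\ell(s_*)\sim\tfrac12 I(u)$, and (modulo the sharp lower bound) $\ln\mathbf{P}(S\le r)\sim-(\ell(s_*)-s_*r)\sim-\tfrac12\int_{1/u}^1\varphi(z)\frac{dz}{z}$, which is (\ref{vero}).

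The main obstacle is the matching lower bound, i.e. showing that the exponential Chebyshev estimate is logarithmically sharp; the upper bound and the integral evaluations are essentially mechanical once slow variation is exploited. I would establish it by Cramér tilting: under the tilted law $d\widetilde{\mathbf P}\propto e^{-s_*S}\,d\mathbf P$ the variable $S$ has mean $\ell'(s_*)=r$ and a variance again expressible through a counting-function integral, and $\mathbf{P}(S\le r)\ge e^{-\ell(s_*)+s_*(r-\delta)}\,\widetilde{\mathbf P}(r-\delta\le S\le r)$; it then remains to verify that $\widetilde{\mathbf P}(r-\delta\le S\le r)$ is not exponentially small, for which one needs quantitative control of the tilted variance to rule out that the tilted law spreads on a scale making this window exponentially light. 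The delicate feature is that $\ell$ here is only \emph{slowly} varying (for the present application $\ell(s)\asymp(\ln s)^2$), a borderline regime in which one cannot simply quote a power-type Tauberian theorem and must carry the slowly varying factor through the tilting estimate.
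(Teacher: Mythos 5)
The paper does not prove this proposition at all: it is imported verbatim as \cite[Theorem 2]{Na4}, so there is no internal proof to compare against. Judged on its own, your Laplace-transform/saddle-point plan is the correct and essentially canonical route to this statement (and is, in substance, what underlies the cited result, which rests on the comparison principle of \cite{Na4} together with the classical evaluation of $\ln\mathbf{P}\{S\le r\}$ via $\inf_s(sr-\ell(s))$ going back to Sytaya and Lifshits). All the key computations check out: the Stieltjes/integration-by-parts representations of $\ell$ and $\ell'$, the identification $u=2s_*$ via $\ell'(s)\sim\varphi(1/(2s))/(2s)$ (Potter bounds do control the kernel $(1+w)^{-2}$ at both ends, and the asymptotic non-uniqueness of $u(r)$ is harmless since $I(u')-I(u)=O(\varphi(1/u)\,|\ln(u'/u)|)=o(I(u))$ for $u'\sim u$), the splitting of $\ell(s_*)$ at $\lambda=1/u$ with Karamata's theorem on the lower piece, and the crucial observation $\varphi(1/u)=o(I(u))$, which indeed follows from uniform convergence on $[T-K,T]$ in logarithmic variables and which is what makes $s_*r$ and both remainders negligible. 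On the lower bound, which you rightly single out as the only nontrivial point: the tilted variance computes to $\sim\varphi(1/u)/(2s_*^2)$, so $\sigma/r\sim c/\sqrt{\varphi(1/u)}\to0$ and $s_*\sigma\asymp\sqrt{\varphi(1/u)}=o(I(u))$; the one refinement your sketch needs is that Chebyshev alone gives only two-sided concentration, whereas the window $[r-\delta,r]$ is one-sided, so you must add a CLT or Berry--Esseen step for the tilted sum (available because the effective number of terms is $\asymp\varphi(1/u)\to\infty$) to get $\widetilde{\mathbf{P}}(S\le r)\ge\tfrac12-o(1)$. With that supplement the argument is complete, and it is in fact more self-contained than the paper, which simply quotes the result.
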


Substituting in (\ref{ur}) $\varphi(\lambda)={\mathfrak C}\cdot\ln(\frac 1\lambda)$ we obtain
$$r\sim\frac {\mathfrak C}2 \ln(u) \quad \Longleftrightarrow \quad
u\sim\frac {{\mathfrak C}\ln(\frac 1r)} {2r}.
$$
Therefore the replacement in (\ref{vero}) $r$ by $\ep^2$ gives
\begin{equation}\label{vero1}
\ln{\bf P}\left\{\sum_{j=1}^\infty\lambda_j\xi_j^2\le\ep^2\right\} \sim
-\,\frac {{\mathfrak C}\ln^2(u)}{4}\sim
-\,{\mathfrak C}\ln^2\big(\frac 1\ep\big),\qquad \ep\to +0.
\end{equation}

As the example of formula (\ref{vero}) application, let us consider a number of well-known Gaussian
processes on $[0,1]$:\\
1) Wiener process $W(t)$;\\
2) Brownian bridge $B(t)=W(t)-tW(1)$;\\
3) centered Winer process $\overline W(t) = W(t)-\int_0^1W(s)\,ds$;\\
4) centered Brownian bridge $\overline B(t) = B(t)-\int_0^1B(s)\,ds$;\\
5) ``elongated'' Brownian bridge  $W^{(u)}(t)=W(t)-utW(1)$, $u<1$ (\cite[4.4.20]{BoS}).\\
6) generalized Slepian process  $\widehat W^{[c]}=W(t+c)-W(t)$, $c\ge1$ (\cite{Sl}).\medskip

It easy to check that the covariances of these processes are the Green functions for the operator 
${\cal L}y=-y''$ with various boundary conditions.\medskip

Processes closely related to mentioned above are\\
7) stationary Ornstein--Uhlenbeck process $U^{(\alpha)}$, $\alpha>0$;\\
8) Ornstein--Uhlenbeck process starting at zero $U^{(\alpha)}_0$, $\alpha\ne0$;\\
9) the Bogolyubov process ${\cal B}^{(\alpha)}$, $\alpha>0$ (\cite{San}, \cite{Pu2}).\medskip

The covariances of these processes,
\begin{eqnarray*}
G_{U^{(\alpha)}}(s,t)& =& \frac 1{2\alpha}\exp (-\alpha|s-t|);\\
G_{U^{(\alpha)}_0}(s,t)& =& \frac 1{2\alpha}\bigl(\exp (-\alpha|s-t|)-\exp (-\alpha(s+t))\bigr);\\
G_{{\cal B}^{(\alpha)}}(s,t)& =& \frac 1{2\alpha
}\,\frac {\exp (\alpha|s-t|)+\exp (\alpha-\alpha|s-t|)}{\exp (\alpha)-1}
\end{eqnarray*}
are the Green functions for the operator  ${\cal L}y=-y''+\alpha^2y$ with various boundary conditions.

\begin{prop}\label{ell=1} Let $\mu$ be a degenerate self-similar measure described in Section 2.
Let $X$ be one of the Gaussian processes listed in 1)-9).
Then
$$\ln {\bf P}\{||X||_{\mu}\leq\ep\}\sim -(n-1)\,\frac {\ln^2(\frac 1{\ep})}{\ln(\frac 1{d_m\cdot a_m})},
\qquad \ep\to +0.
$$
\end{prop}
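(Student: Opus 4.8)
The plan is to reduce the statement to a direct application of the preceding Proposition (\cite[Theorem 2]{Na4}), feeding it the spectral asymptotics supplied by Theorem~\ref{spectral_asympt} and transferring the conclusion back to $\|X\|_\mu$ through the Karhunen--Lo\'eve expansion~\eqref{KL}.

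First I would record that each of the processes 1)--9) corresponds to the case $\ell=1$. Indeed, the covariance $G_X$ is the Green function of ${\cal L}y=-y''$ for 1)--6) and of ${\cal L}y=-y''+\alpha^2y$ for 7)--9), in both cases a second-order operator whose lower-order coefficient lies in $L_1(0,1)$ and which is equipped with appropriate self-adjoint boundary conditions. Hence Theorem~\ref{spectral_asympt} applies, and since $G_X$ is the kernel of ${\cal L}^{-1}$ the problem~\eqref{BVP} is equivalent to~\eqref{int_eq}; in particular the eigenvalues $\lambda_j^{({\cal L}_\mu)}$ are exactly the coefficients $\lambda_j$ of~\eqref{KL}. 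For $\ell=1$ the scaling constant of Theorem~\ref{spectral_asympt} is $q=1/(d_m a_m)$, so
\begin{equation*}
{\cal N}(\lambda)\sim\varphi(\lambda):={\mathfrak C}\,\ln\frac{1}{\lambda},\qquad
{\mathfrak C}=\frac{n-1}{\ln(1/(d_m a_m))},\qquad\lambda\to+0 .
\end{equation*}
It is worth emphasizing that this limit depends only on $\mu$ (through $n$, $d_m$, $a_m$) and not on the lower-order terms or the boundary conditions, which is precisely why all nine processes yield the same answer.

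Next I would verify that $\varphi$ is slowly varying at zero, which is immediate from $\varphi(ct)/\varphi(t)=(\ln(1/c)+\ln(1/t))/\ln(1/t)\to1$. Thus the hypotheses of the preceding Proposition hold, and the computation already carried out there with $\varphi(\lambda)={\mathfrak C}\ln(1/\lambda)$ — substituting into~\eqref{ur} and~\eqref{vero} — gives~\eqref{vero1}:
\begin{equation*}
\ln{\bf P}\Bigl\{\sum_{j=1}^\infty\lambda_j\xi_j^2\le\ep^2\Bigr\}\sim-{\mathfrak C}\,\ln^2\frac{1}{\ep},\qquad\ep\to+0 .
\end{equation*}
By~\eqref{KL} the event $\{\sum_j\lambda_j\xi_j^2\le\ep^2\}$ has the same probability as $\{\|X\|_\mu\le\ep\}$, so inserting the value of ${\mathfrak C}$ turns the last display into the asserted asymptotics.

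Since the argument only composes results established earlier, I expect no serious obstacle; the genuine content lies entirely in Theorem~\ref{spectral_asympt}. The points that still deserve a careful check are preliminary: that every process on the list really does fit the Green-function framework — in particular that the associated boundary-value problem is self-adjoint and positive definite, so that ${\cal L}^{-1}$ exists and coincides with the covariance operator — and that the differences in boundary conditions among the processes, being a finite-dimensional effect, do not perturb the logarithmic asymptotics. Both are handled by the general form of Theorem~\ref{spectral_asympt} together with the classical descriptions of these covariances, so the main step reduces to invoking the preceding Proposition.
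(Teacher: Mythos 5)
Your proposal is correct and follows exactly the paper's own route: apply Theorem~\ref{spectral_asympt} with $\ell=1$ to get ${\cal N}(\lambda)\sim(n-1)\ln(1/\lambda)/\ln(1/(d_m a_m))$, then feed this into formula~(\ref{vero1}) via the Karhunen--Lo\'eve expansion~(\ref{KL}). The paper's proof is a one-line citation of these same two ingredients; your version merely makes the intermediate checks (slow variation of $\varphi$, identification of the eigenvalues) explicit.
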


\begin{proof} The statement is a consequence of Theorem \ref{spectral_asympt} (with  $\ell=1$) and 
formula (\ref{vero1}).
\end{proof}

Now we consider ${\mathfrak s}$-times integrated processes (here any $\beta_j$ equals either zero or 
one, $0\le t\le1$):
$$X_{\mathfrak s}(t)\equiv X_{\mathfrak s}^{[\beta_1,\,\ldots,\,\beta_{\mathfrak s}]}(t) =
(-1)^{\beta_1+\,\dots\,+\beta_{\mathfrak s}}\underbrace {\int\limits_{\beta_{\mathfrak s}}^t\dots
\int\limits_{\beta_1}^{t_1}}_{\mathfrak s} \ \ X (s)\ ds\ dt_1\dots\, .$$

By \cite[Theorem 2.1]{NN1}, for $X$ being one of the Gaussian processes 1)-6), the covariance of 
the process $X_{\mathfrak s}$ is the Green function for the operator 
${\cal L}y=(-1)^{{\mathfrak s}+1}y^{(2{\mathfrak s}+2)}$ with suitable boundary conditions 
(depending on endpoints of integration $\beta_j$). Analogously, for $X$  being one of the Gaussian 
processes 7)-9), the covariance of the process $X_{\mathfrak s}$ is the Green function for the 
operator ${\cal L}y=(-1)^{\mathfrak s}(-y^{(2{\mathfrak s}+2)}+\alpha^2y^{(2{\mathfrak s})})$ with 
suitable boundary conditions.

\begin{prop}\label{ell>1} Let $\mu$ be a degenerate self-similar measure described in Section 2.
Let $X$ be one of the listed Gaussian processes. Then
\begin{equation}\label{vero2}
\ln {\bf P}\{||X_{\mathfrak s}||_{\mu}\leq\ep\}\sim
-(n-1)\,\frac {\ln^2(\frac 1{\ep})}{\ln\big(\frac 1{d_m\cdot a_m^{2{\mathfrak s}+1}}\big)},
\qquad \ep\to +0.
\end{equation}
\end{prop}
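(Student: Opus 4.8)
The plan is to reduce the statement to Theorem~\ref{spectral_asympt} together with the small ball formula (\ref{vero1}), exactly mirroring the proof of Proposition~\ref{ell=1} but now with $\ell=\mathfrak{s}+1$. Both ingredients are already in place: the cited result \cite[Theorem 2.1]{NN1} identifies the covariance of $X_{\mathfrak s}$ with the Green function of a differential operator of order $2\mathfrak{s}+2$ whose leading term is $(-1)^{\mathfrak{s}+1}y^{(2\mathfrak{s}+2)}$, and Theorem~\ref{spectral_asympt} yields the counting-function asymptotics for the eigenvalue problem (\ref{BVP}) attached to any operator of the admissible form (\ref{operator}). So the whole argument is a matter of checking that the relevant operators fit that framework and then tracking the constants.

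First I would verify that each operator fits (\ref{operator}) with $\ell=\mathfrak{s}+1$. For the processes 1)--6) the operator is simply ${\cal L}y=(-1)^{\ell}y^{(2\ell)}$ with no lower-order terms, so this is immediate. For the processes 7)--9) the operator ${\cal L}y=(-1)^{\mathfrak s}(-y^{(2\mathfrak s+2)}+\alpha^2 y^{(2\mathfrak s)})$ has the same leading term $(-1)^{\ell}y^{(2\ell)}$, while the remaining summand $(-1)^{\mathfrak s}\alpha^2 y^{(2\mathfrak s)}=(-1)^{\ell-1}\alpha^2 y^{(2(\ell-1))}$ is exactly a term $({\cal P}_{\ell-1}y^{(\ell-1)})^{(\ell-1)}$ of (\ref{operator}) with constant coefficient ${\cal P}_{\ell-1}=(-1)^{\ell-1}\alpha^2\in L_1(0,1)$; in the quadratic form this contributes $(-1)^{\ell-1}\alpha^2|y^{(\ell-1)}|^2$, i.e.\ precisely the $i=\ell-1$ summand of (\ref{quadrform}), so it is absorbed by the compact-perturbation part of the proof of Theorem~\ref{spectral_asympt}. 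One also checks that, under the boundary conditions produced by \cite{NN1}, the operator is self-adjoint and positive definite, so Theorem~\ref{spectral_asympt} applies verbatim.

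Next I would read off the exponents. Since $2\ell=2\mathfrak{s}+2$ we have $2\ell-1=2\mathfrak{s}+1$, hence the constant of Theorem~\ref{spectral_asympt} is $q=\frac1{d_m\cdot a_m^{2\mathfrak s+1}}$, and (\ref{count_func}) gives
\[
{\cal N}_{{\cal L}_{\mu}}(\lambda)\sim\mathfrak{C}\,\ln\!\big(\tfrac1\lambda\big)=:\varphi(\lambda),
\qquad \mathfrak{C}=\frac{n-1}{\ln\big(\frac1{d_m\cdot a_m^{2\mathfrak s+1}}\big)},\qquad \lambda\to+0.
\]
The function $\varphi$ is slowly varying at zero, since $\varphi(c\lambda)/\varphi(\lambda)=1-\ln c/\ln(1/\lambda)\to1$, so \cite[Theorem 2]{Na4} applies; feeding $\varphi(\lambda)=\mathfrak{C}\ln(1/\lambda)$ through (\ref{ur})--(\ref{vero}) produces (\ref{vero1}) with this particular $\mathfrak{C}$, which is exactly the asserted relation (\ref{vero2}).

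The computation is routine, and the only point deserving care is the verification in the second paragraph that the operators attached to processes 7)--9) genuinely satisfy the self-adjointness and positive-definiteness hypotheses of Theorem~\ref{spectral_asympt} under the boundary conditions supplied by \cite[Theorem 2.1]{NN1}, and that the $\alpha^2$-term is a bona fide lower-order perturbation not affecting the leading counting-function asymptotics. Once that is secured, the correct identification of $\mathfrak{C}$ and the appeal to (\ref{vero1}) finish the proof.
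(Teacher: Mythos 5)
Your proposal is correct and follows exactly the paper's own argument: the paper's proof of Proposition~\ref{ell>1} is the one-line deduction from Theorem~\ref{spectral_asympt} with $\ell=\mathfrak{s}+1$ and formula (\ref{vero1}), and your additional verifications (that the operators for processes 7)--9) fit the form (\ref{operator}) with the $\alpha^2$-term as an admissible lower-order perturbation, and the identification of $q$ and $\mathfrak{C}$) are just the details the paper leaves implicit.
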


\begin{proof} The statement follows from Theorem \ref{spectral_asympt} (with $\ell={\mathfrak s}+1$)
and formula (\ref{vero1}).
\end{proof}

\begin{rem} We list some more well-known Gaussian process for which Proposition \ref{ell>1} can be 
applied:\\
10) ``bridged'' (conditional) integrated Wiener process (\cite{La}, see also \cite[Proposition 5.3]{NN1})
$${\mathbb B}_{\mathfrak s}(t)=(W_{\mathfrak s}(t)\bigr|\ W_j(1)=0,\ 0\le j\le {\mathfrak s});$$
11) ${\mathfrak s}$-times centered-integrated Wiener process (see \cite[Sec. 4]{Na1}),
derived from $W(t)$ by alternate operations of centering and integration;\\
12) ${\mathfrak s}$-times centered-integrated Brownian bridge (see \cite[Sec. 3]{Na1});\\
13) the Matern process ${\cal M}^{({\mathfrak s}+1)}$ (see \cite{RW}, \cite{Pu1}) with covariance
$$G_{{\cal M}^{({\mathfrak s}+1)}}(s,t)=\frac 1{2^{2{\mathfrak s}+1}{\mathfrak s}!}\,\exp (-|s-t|)
\sum\limits_{k=0}^{\mathfrak s}\frac {({\mathfrak s}+k)!}{k!({\mathfrak s}-k)!}\,(2|s-t|)^{{\mathfrak s}-k}.
$$
\end{rem}

\section{Appendix}

The next Lemma is a variant of classical Weyl theorem (\cite{W}; see also  \cite[Lemma 1.17]{BS3}). A function $f$ is called \textit{uniformly continuous in logarithmic scale} on a set $E\subset\mathbb R_+$, 
if the function $\widetilde f=\ln\circ\, f\circ\exp$ is uniformly continuous on corresponding set.

\begin{lem}\label{lem:Weyl} Let  $\cal A$ be infinite-dimensional compact self-adjoint positive operator
in a Hilbert space $\cal H$.

{\bf 1}. Let the eigenvalues of ${\cal A}$ satisfy the relation
\begin{equation}\label{asymptotics1}
\lambda_j^{({\cal A})}\sim \psi(j),\qquad  j\to\infty,
\end{equation}
where  $\psi$ is a function uniformly continuous in logarithmic scale on $[1,+\infty[$. Then the 
asymptotics (\ref{asymptotics1}) does not change under compact perturbation of the metric in $\cal H$.

Namely, let ${\cal Q}$ be a compact self-adjoint positive operator in $\cal H$ such that 
$\min \lambda^{({\cal Q})}>-1$. Define a new scalar product in $\cal H$ by the formula 
$[u,v]_1=[u+{\cal Q}u,v]_{\cal H}$. Then
\begin{equation}\label{Weyl1}
\lambda_j^{({\cal A})}\sim \lambda_j^{({\cal A}_1)},\qquad  j\to\infty,
\end{equation}
where a positive compact operator  ${\cal A}_1$ is given by relation
\begin{equation}\label{perturb}
[{\cal A}_1u,v]_1=[{\cal A}u,v]_{\cal H}.
\end{equation}

{\bf 2}. Let the eigenvalues counting function of ${\cal A}$ satisfy the relation
\begin{equation}\label{asymptotics2}
{\cal N}_{\cal A}(\lambda)\sim \Psi(1/\lambda),\qquad  \lambda\to+0,
\end{equation}
where $\Psi$ is a function uniformly continuous in logarithmic scale on $[1,+\infty[$. Then the 
asymptotics (\ref{asymptotics2}) does not change under compact perturbations of the metric in $\cal H$, 
i.e.
\begin{equation}\label{Weyl2}
{\cal N}_{\cal A}(\lambda)\sim {\cal N}_{{\cal A}_1}(\lambda),\qquad  \lambda\to+0,
\end{equation}
where  ${\cal A}_1$ is given by (\ref{perturb}).
\end{lem}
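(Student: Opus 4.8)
The final statement is Lemma 5.1, a variant of the Weyl theorem asserting that under a compact perturbation of the metric, the eigenvalue asymptotics (Part 1) and the counting function asymptotics (Part 2) are preserved, provided the governing function is uniformly continuous in logarithmic scale. The plan is to reduce everything to a comparison between the eigenvalues of $\cal A$ in the original metric and those of ${\cal A}_1$ in the perturbed metric, using the variational (minimax) characterization of eigenvalues of a compact self-adjoint positive operator together with the fact that the two metrics are uniformly equivalent up to a factor that is close to $1$ on a subspace of finite codimension.

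The first and central step is to relate the two spectral problems variationally. Since ${\cal Q}$ is compact, self-adjoint and positive with $\min\lambda^{({\cal Q})}>-1$, the form $[u,v]_1=[u+{\cal Q}u,v]_{\cal H}$ defines an equivalent scalar product, and $\lambda_j^{({\cal A}_1)}$ are the eigenvalues of $\cal A$ computed with respect to $[\cdot,\cdot]_1$. Using the Rayleigh quotient characterization, $\lambda_j^{({\cal A})}$ and $\lambda_j^{({\cal A}_1)}$ are the successive maxima of $[{\cal A}u,u]_{\cal H}$ over the unit sphere in the two respective metrics. Because ${\cal Q}$ is compact, for any $\delta>0$ there is a finite-dimensional subspace $\cal F$ such that $\big|[{\cal Q}u,u]_{\cal H}\big|\le\delta[u,u]_{\cal H}$ for all $u\perp{\cal F}$. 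Thus on the subspace ${\cal F}^{\perp}$ the two metrics satisfy $(1-\delta)[u,u]_{\cal H}\le[u,u]_1\le(1+\delta)[u,u]_{\cal H}$. I would feed this two-sided bound into the minimax formula and absorb the finite-dimensional discrepancy coming from $\cal F$ into a bounded index shift. This yields, for every $\delta>0$ and all large $j$, an inequality of the form
$$
\lambda_{j+N}^{({\cal A})}(1-\delta)\le\lambda_j^{({\cal A}_1)}\le\lambda_{j-N}^{({\cal A})}(1+\delta),
$$
where $N=\dim{\cal F}$ is fixed once $\delta$ is fixed. The analogous statement for counting functions reads ${\cal N}_{\cal A}\big((1+\delta)\lambda\big)-N\le{\cal N}_{{\cal A}_1}(\lambda)\le{\cal N}_{\cal A}\big((1-\delta)\lambda\big)+N$.

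The second step is to convert these index-shift-and-scaling inequalities into the desired asymptotic equivalences, and this is exactly where the \emph{uniform continuity in logarithmic scale} hypothesis is used. For Part 1, writing $\widetilde\psi=\ln\circ\,\psi\circ\exp$, the assumed uniform continuity of $\widetilde\psi$ guarantees $\psi(j\pm N)/\psi(j)\to1$ as $j\to\infty$, so the bounded shift $N$ is asymptotically negligible; combined with the arbitrariness of $\delta$, the factors $(1\pm\delta)$ are squeezed out, giving $\lambda_j^{({\cal A}_1)}\sim\psi(j)\sim\lambda_j^{({\cal A})}$. For Part 2, the same uniform-continuity property of $\Psi$ ensures $\Psi\big((1\pm\delta)^{-1}/\lambda\big)/\Psi(1/\lambda)\to1$ as $\delta\to0$ uniformly in the relevant range, and the additive constant $N$ is negligible against $\Psi(1/\lambda)\to\infty$; letting $\delta\to0$ yields ${\cal N}_{{\cal A}_1}(\lambda)\sim\Psi(1/\lambda)\sim{\cal N}_{\cal A}(\lambda)$.

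The main obstacle I anticipate is not any single deep fact but the careful bookkeeping in the first step: one must verify that the finite-codimension estimate for $\cal Q$ can indeed be inserted into the minimax quotient so that the perturbation splits cleanly into a multiplicative factor $(1\pm\delta)$ plus a bounded index shift $N$, with $N$ depending only on $\delta$ and not on $j$ or $\lambda$. Handling the boundary indices (small $j$, or the transition where $j-N$ becomes nonpositive) and confirming that the logarithmic-scale uniform continuity is precisely the regularity needed to neutralize both the shift and the scaling simultaneously is the delicate part; once this is in place, the two conclusions follow by a routine $\delta\to0$ limiting argument.
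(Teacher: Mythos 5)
Your proposal is correct and follows essentially the same route as the paper: a finite-codimension subspace on which $\cal Q$ is $\delta$-small, the variational principle yielding a multiplicative factor $(1\pm\delta)$ plus a bounded index shift (equivalently, an additive constant $M_\delta$ for the counting functions), and the uniform continuity in logarithmic scale to neutralize the shift in Part 1 and the rescaling of $\lambda$ in Part 2 before letting $\delta\to0$. The only cosmetic difference is that the paper proves one inequality of each pair and obtains the other by interchanging $\cal A$ and ${\cal A}_1$, whereas you state the two-sided bounds directly.
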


\begin{rem} For the power-type asymptotics both statements of Lemma are equivalent.
The statement {\bf 1} works also for ``slow'' (sub-power) eigenvalues decreasing while {\bf 2} works in super-power case.
\end{rem}

\begin{rem} The second part of Lemma can be easily extracted from \cite[Theorem 3.2]{MM}. 
However, the techniques of \cite{MM} is rather complicated because a more general case of 
non-self-adjoint operators is considered. So, for the reader convenience we give
a simple variational proof of both statements.
\end{rem}

\begin{proof}
By compactness of $\cal Q$, for a given  $\delta$, we can find a finite-dimensional subspace 
${\cal H}_{\delta}$, $\dim{\cal H}_{\delta}^{\perp}=M(\delta)$, such that
$$\big|[{\cal Q}u,u]_{\cal H}\big|\le\delta[u,u]_{\cal H}, \qquad
u\in{\cal H}_{\delta}.$$ 
If $u\in{\cal H}_{\delta}$, then
$$\gathered \ 
[{\cal A}u,u]_{\cal H}<\lambda[u,u]_{\cal H}\quad\Longrightarrow\quad
[{\cal A}_1u,u]_1<\frac {\lambda}{1-\delta}\,[u,u]_1;\\
\ [{\cal A}u,u]_{\cal H}>\lambda[u,u]_{\cal H}\quad\Longrightarrow\quad
[{\cal A}_1u,u]_1>\frac {\lambda}{1+\delta}\,[u,u]_1.
\endgathered
$$

According to the variational principle, see, e.g., \cite[(1.25)--(1.26)]{BS3}, we have
\begin{equation}\label{Weyl3}
\lambda_j^{({\cal A}_1)}\ge\frac {\lambda_{j+M_{\delta}}^{({\cal A})}}
{1+\delta};\qquad\qquad {\cal N}_{{\cal A}_1}(\lambda)\le
{\cal N}_{\cal A}(\lambda(1-\delta))+M_{\delta}.
\end{equation}

Let the relation (\ref{asymptotics1}) hold. Then, dividing the first inequality in (\ref{Weyl3}) by $\psi(j)$ we obtain
$$\frac {\lambda_j^{({\cal A}_1)}}{\psi(j)}\ge\frac 1{1+\delta}\cdot\frac
{\lambda_{j+M_{\delta}}^{({\cal A})}}{\psi(j+M_{\delta})}\cdot\exp
\Big(\widetilde\psi\big(\ln(j)+\ln(1+{\textstyle\frac {M_{\delta}}j})\big)-
\widetilde\psi(\ln(j))\Big).$$
Passage to the bottom limit gives
$$\liminf\limits_{j\to\infty}\frac {\lambda_j^{({\cal A}_1)}}{\psi(j)}\ge
\frac 1{1+\delta}.$$
Changing ${\cal A}$ and ${\cal A}_1$ in (\ref{Weyl3}) and taking  $\delta\to 0$, we arrive at (\ref{Weyl1}).\medskip

Now let  (\ref{asymptotics2}) hold.  Then, dividing the second inequality in (\ref{Weyl3}) by $\Psi(1/\lambda)$ we obtain
$$\frac {{\cal N}_{{\cal A}_1}(\lambda)}{\Psi(1/\lambda)}\le
\frac {{\cal N}_{\cal A}(\lambda(1-\delta))}{\Psi(1/\lambda(1-\delta))}
\cdot \exp\Big(\widetilde\Psi\big(\ln(1/\lambda)+\ln(1/(1-\delta))\big)-
\widetilde\Psi(\ln(1/\lambda))\Big)+\frac {M_{\delta}}{\Psi(1/\lambda)}.
$$
Passage to the top limit gives
$$\limsup\limits_{\lambda\to+0} \frac {{\cal N}_{{\cal A}_1}(\lambda)}
{\Psi(1/\lambda)}\le 1+\ep,$$ 
where $\ep\to +0$ as $\delta\to +0$.

Changing ${\cal A}$ and ${\cal A}_1$ in (\ref{Weyl3}) and taking $\delta\to 0$, we arrive at 
(\ref{Weyl2}).
\end{proof}

\bigskip

We are grateful to A.A.~Vladimirov for important advice.



\begin{thebibliography}{AFTL}

\bibitem
{S}
G.N.~Sytaya, \emph{On some asymptotic representations of the Gaussian measure in a Hilbert space},  Theory of Stohastic Processes, \textbf{2},
(1974), 93--104 (in Russian).

\bibitem
{Lf}
M.A.~Lifshits, {\em Asymptotic behavior of small ball
probabilities}, Probab. Theory and Math. Stat., 1999.
B.Grigelionis et al.\ (Eds), Proc. VII International Vilnius
Conference (1998), VSP/TEV, 453--468.

\bibitem
{LS}
W.V.~Li, Q.M.~Shao, {\em Gaussian processes:
inequalities, small ball probabilities and applications},
Stochastic Processes: Theory and Methods. Handbook of Statistics,
{\bf 19} (2001), C.R.Rao and D.Shanbhag (Eds), 533--597.

\bibitem
{site}
{\em Small Deviations for Stochastic Processes
and Related Topics}, Internet site, 
http://www.proba.jussieu.fr/pageperso/smalldev/

\bibitem
{Li}
W.V.~Li, {\em Comparison results for the lower tail of
Gaussian semi\-norms}, J. Theor. Probab., {\bf 5} (1992), 1--31.

\bibitem
{DLL}
T.~Dunker, M.A.~Lifshits, W.~Linde, {\em  Small
deviations of sums of independent variables}, In:  Proc. Conf.
High Dimensional Probab., Ser. Progress in Probability,
Birkh\"{a}user, {\bf 43} (1998), 59--74.

\bibitem
{Nm}
M.A.~Naimark, {\em Linear differential operators}, Ed.2, Nauka, 1969
(in Russian). English transl. of the 1st ed.: Naimark~M.A.
Linear Differential Operators. Part I (1967): Elementary Theory of
Linear Differential Operators.
N.Y.: F. Ungar Publishing Company Co. XIII. Part II (1968): Linear
differential operators in Hilbert space. N.Y.: 
F. Ungar Publishing Company Co. XV.

\bibitem
{NN1}
A.I.~Nazarov, Ya.Yu.~Nikitin, {\em Exact $L_2$-small
ball behavior of integrated Gaussian processes and spectral
asymptotics of boundary value problems}, Probab. Theory and
Rel. Fields, {\bf 129} (2004), 469--494.

\bibitem
{Na1} A.I.~Nazarov, {\em Exact $L_2$-Small Ball Asymptotics of Gaussian Processes and the Spectrum of Boundary-Value Problems}, J. Theor. Probab. {\bf 22} (2009), N3, 640--665.

\bibitem
{Na2}
A.I.~Nazarov, \emph{On the sharp constant in small ball asymptotics of some 
Gaussian processes under $L_2$-norm}, Probl. Mat. Anal. {\bf 26} (2003),
179--214 (in Russian); English transl.: J. Math. Sci. {\bf 117} (2003), N3, 4185--4210. 

\bibitem
{GHLT}
F.~Gao, J.~Hannig, T.-Y.~Lee, F.~Torcaso,
{\em Exact $L^2$-small balls of Gaussian processes}, J.\ of Theor.\
Prob., {\bf 17} (2004), 503--520.

\bibitem
{GHLT1}
F.~Gao, J.~Hannig, T.-Y.~Lee, F.~Torcaso,
{\em Laplace transforms via Hadamard factorization with applications
to small ball probabilities,}
El. J. Probab. {\bf8(13)} (2003), 1--20.

\bibitem
{NP}
A.I.~Nazarov, R.S.~Pusev, \emph{Exact $L_2$-small ball asymptotics for some weighted Gaussian processes}, Probability and statisics, ZNS POMI,  {\bf 364} (2009), 166--199 (in Russian);
English transl.: J. Math. Sci. {\bf 163} (2009), N4, 409--429.


\bibitem
{NN2}
A.I.~Nazarov, Ya.Yu.~Nikitin, \emph{Logarithmic $L_2$-small ball asymptotics for some fractional 
Gaussian processes}, Theor. Ver. Primen., {\bf 49} (2004), N4, 695--711 (in Russian); 
English transl.: Theor. Probab. Appl., {\bf 49} (2005), N4, 645--658.

\bibitem
{Na4}
A.I.~Nazarov, {\em Log-level comparison principle for small ball probabilities},
Stat. \& Prob. Letters, {\bf 79} (2009), N4, 481--486.

\bibitem
{BS1}
M.S.~Birman, M.Z.~Solomyak, {\em Spectral asymptotics of weakly polar integral operators}, Izv. AN SSSR, matem., {\bf 34} (1970), N6, 1143--1158 (in Russian).

\bibitem
{KNN}
A.~Karol', A.~Nazarov, Ya.~Nikitin, {\em Small ball probabilities for
Gaussian random fields and tensor products of compact operators}, Trans. AMS,
{\bf360} (2008), N3, 1443--1474.

\bibitem
{KL}
J.~Kigami, M.L.~Lapidus, {\em Weyl's problem for the spectral
distributions of Laplacians on p.c.f. self-similar fractals}, Comm. Math.
Phys. {\bf 158} (1993), 93--125.

\bibitem
{SV}
M.~Solomyak, E.~Verbitsky, {\em On a spectral problem related to
self-similar measures}, Bull. London Math. Soc. {\bf 27} (1995), 242--248.

\bibitem
{VSh1}
A.A.~Vladimirov, I.A.~Sheipak, \emph{Self-similar functions in space $L_2[0,1]$ and Sturm-Liouville problem with singular weight}, Matem. sbornik, {\bf 197} (2006), N11, 13--30 (in Russian); English transl.: 
Sbornik: Mathematics, 197(11), (2006), 1569--1586.

\bibitem
{VSh2}
A.A.~Vladimirov, I.A.~Sheipak, \emph{Special property of Neumann boundary condition for Sturm--Liouville problem with singular weight}, Int.
Conf. ``Diff. Eqs. and Rel. Topics'' (Moscow, May 2004). Book of abstracts. P.238 (in Russian).

\bibitem
{VSh3}
A.A.~Vladimirov, I.A.~Sheipak, \emph{Indefinite Sturm--Liouville problem for some classes of self-similar singular weights}, Trudy MIRAN, {\bf 255}, (2006), 88--98 (in Russian); English transl.: 
Proceedings of the Steklov Institute of Mathematics, 2006, {\bf 255}, 1--10.

\bibitem
{Na3}
A.I.~Nazarov, \emph{Logarithmic asymptotics of small deviations for some Gaussian 
processes in the ${L_2}$-norm with respect to a self-similar measure}, ZNS POMI, 
{\bf 311} (2004), 190--213 (in Russian); English transl.: J. Math. Sci., 
{\bf 133} (2006), N3, 1314--1327.

\bibitem
{VSh4}
A.A.~Vladimirov, I.A.~Sheipak, \emph{Eigenvalue asymptotics for Sturm--Liouville problem with discrete self-similar weight}, http://arxiv.org/arXiv:0709.0424 (in Russian).

\bibitem
{BS2}
M.S.~Birman, M.Z.~Solomyak, {\em Spectral theory of self-adjoint operators in Hilbert space}, Leningrad Uiversity Publishers, 1980 (in Russian); English transl. in: Math. and Its Applic. Soviet Series, 
{\bf 5}. Dordrecht, etc.: Kluwer Academic Publishers, 1987.

\bibitem
{H}
J.E.~Hutchinson, {\em Fractals and Self Similarity}, Indiana Univ.
Math. Journ. {\bf30} (1981), N5, 713--747.

\bibitem
{Sh1}
I.A.~Sheipak, \emph{On the construction and some properties of self-similar functions in the spaces 
$L_p[0, 1]$}, Matem. zametki, {\bf 81} (2007), N6, 924--938 (in Russian); English transl.: Mathem. 
Notes, {\bf 81} (2007), N5-6, 827--839.

\bibitem
{Sh2}
I.A.~Sheipak, {\em Singular Points of a Self-Similar Function of Spectral Order Zero: 
Self-Similar Stieltjes String}, Matem. zametki, {\bf 88} (2010), N2, 303--316 (in Russian); 
English transl.: Mathem. Notes, {\bf 88} (2010), N2, 275--286.

\bibitem
{F}
T.~Fujita, {\em A fractional dimension, self-similarity and a generalized
diffusion operator}, Taniguchi Symp. PMMP. Katata, 1985, 83--90.

\bibitem
{BoS}
A.N.~Borodin, P.~Salminen, {\em Handbook of Brownian Motion: Facts and Formulae}, Birkh\"{a}user, Basel, 1996.

\bibitem
{Sl}
D.~Slepian, {\em First passage time for a particular Gaussian process},
Ann. Math. Stat. {\bf 32} (1961), 610--612.

\bibitem
{San}
D.P.~Sankovich \emph{Some properties of functional integrals with respect to the Bogoliubov measure}, Theor. Math. Phys., {\bf 126} (2001), N1, 121--135.

\bibitem
{Pu2}
R.S.~Pusev, \emph{Small ball asymptotics for the Bogolyubov process in quadratic norm}, Theor. Math. Phys., 2010, to appear.

\bibitem
{La}
A.~Lachal, {\em Bridges of certain Wiener integrals. Prediction properties,
relation with polynomial interpolation and differential equations.
Application to goodness-of-fit testing}, In: Bolyai Math. Studies X, Limit
Theorems, Balatonlelle (Hungary), 1999. Budapest, 2002, 1--51.

\bibitem
{RW}
C.E.~Rasmussen, C.K.I.~Williams, {\em Gaussian processes for machine learning}, MIT Press, Cambridge, 2006.

\bibitem
{Pu1}
R.S.~Pusev, \emph{Small ball asymptotics of the Mat\'ern processes and fields in quadratic norm with weight}, Theor. Ver. Primen., {\bf 55} (2010), N1, 187--195 (in Russian). To be transl. in: 
Theor. Probab. Appl., {\bf 55} (2011), N1.

\bibitem
{W}
H.~Weyl, {\em Das asymptotische Verteilungsgesetz der
Eigenwerte linearer partieller Differentialgleichungen}, Math. Ann.
{\bf 71} (1912), 441--479.

\bibitem
{BS3}
M.S.~Birman, M.Z.~Solomyak, \emph{Quantitative analysis in Sobolev imbedding theorems and applications to spectral theory}. In: Proceed. of X Summer Mathematical School. Yu.A.~Mitropolskii, A.F.~Shestopal (Eds), 1974, 5--189 (in Russian). English transl.: AMS Translations, Series 2, 114, AMS, Providence, R.I. 
(1980).

\bibitem
{MM}
A.S.~Markus, V.I.~Matsaev, \emph{Comparison theorems for spectra of linear operators and spectral asymptotics}, Trudy MMO, \textbf{45} (1982), 133--181 (in Russian).


\end{thebibliography}
\end{document}